\documentclass[10pt,reqno]{amsart}

\usepackage{graphicx}
\usepackage{multirow}
\usepackage{amsmath,amssymb,amsfonts}
\usepackage{amsthm}
\usepackage{mathrsfs}
\usepackage[title]{appendix}
\usepackage{xcolor}
\usepackage{indentfirst}
\usepackage{titlesec}

\titleformat{\section}[block]
  {\normalfont\Large\bfseries}
  {\thesection}{1em}{}
\titlespacing*{\section}
  {0pt}{2\baselineskip}{1\baselineskip}

\titleformat{\subsection}[block]
  {\normalfont\bfseries}
  {\thesubsection}{1em}{}
\titlespacing*{\subsection}
  {0pt}{1.5\baselineskip}{0.5\baselineskip}

\titleformat{\subsubsection}[block]
  {\normalfont\normalsize\bfseries}
  {\thesubsubsection}{1em}{}

\titlespacing*{\subsubsection}
  {0pt}{1.25\baselineskip}{0.4\baselineskip}

\usepackage{textcomp}
\usepackage{manyfoot}
\usepackage{booktabs}
\usepackage{algorithm}
\usepackage{algorithmicx}
\usepackage{algpseudocode}
\usepackage{listings}
\usepackage{tikz}
\usepackage{tikz-cd}
\usepackage[utf8]{inputenc}

\usetikzlibrary{
  arrows.meta,
  positioning,
  calc,
  decorations.pathreplacing
}

\usepackage{mathtools}
\usepackage{enumitem}

\setlist[itemize]{
  leftmargin=*,
  labelindent=\parindent,
  before=\vspace{0.5\baselineskip},
  after=\vspace{0.5\baselineskip}
}

\usepackage{hyperref}

\hypersetup{
  colorlinks=true,
  linkcolor=blue,
  citecolor=blue,
  urlcolor=cyan,
  pdftitle={Distances on Constraint Representations},
  pdfauthor={Kohei Hatano}
}
\mathtoolsset{showonlyrefs}
\numberwithin{equation}{section}

\newtheorem{prop}{Proposition}[section]
\newtheorem{lem}[prop]{Lemma}
\newtheorem{ex}[prop]{Example}
\newtheorem{dfn}[prop]{Definition}%
\newtheorem{thm}[prop]{Theorem}%
\newtheorem{assume}[prop]{Assumption}%

\begin{document}
\title[Distances on Constraint Systems]{Distances on Finite Constraint Systems and LICQ Radii in Nonlinear Programming}
\author{Kohei Hatano}
\thanks{Information Technology R\&D Center, Mitsubishi Electric Corporation.}
\thanks{Email: \texttt{Hatano.Kohei@ce.MitsubishiElectric.co.jp}}
\begin{abstract}

Mathematical programming formulations are routinely modified through
algebraic reformulations, perturbations of constraint functions, and the
addition or removal of constraints. Some of these modifications preserve the
feasible set, whereas others reflect newly imposed modeling requirements and
therefore change the optimization problem itself. This paper develops a
metric framework for quantifying the robustness of formulation-dependent
properties. Formulations are modeled as typed collections of constraint
functions. Within each fixed typed-cardinality class, their distance is
defined by optimally matching constraints of the same type and measuring the
resulting discrepancies in a norm on the underlying function space.

As a concrete demonstration of the framework, we study the Linear Independence
Constraint Qualification (LICQ). We introduce the pointwise LICQ radius,
defined as the distance from a feasible formulation to the set of formulations
that remain feasible at a prescribed point but fail LICQ there. Under suitable
assumptions, we derive an explicit formula for this radius and characterize the
effect of adding a single equality or inequality constraint.

\end{abstract}

\keywords{constrained optimization; constraint qualifications; LICQ;
symmetric products; matching distance; stability analysis}

\maketitle
\section{Introduction}
Constraint formulations in mathematical programming are often modified during the operational phase. 
Specifically, such modifications may include the addition or deletion of constraints in response to changes in the requirements of the target problem, 
as well as changes in the coefficients and right-hand-side constants appearing in the constraint functions. 
In practice, an important issue is determining the extent to which existing software and algorithms must be modified in response to such changes.

On the other hand, it is generally difficult to determine, before actually solving the problem, 
how changes in the constraint formulation will affect the difficulty of solving the problem. 
For example, even when two formulations represent the same feasible set, formulation-dependent 
properties—such as the gradients of the constraint functions and the structure of the active constraints—may differ. 
Such differences can affect the numerical stability of computations and the behavior of algorithms.

Motivated by this issue, the ultimate goal of this study is to quantify properties that depend on the formulation 
of an optimization problem and to construct a mathematical framework for evaluating, 
in advance and without actually solving the problem, the potential effects of formulation changes on the properties of the optimization problem and the stability of algorithms.

As a first step toward this goal, this paper considers continuous optimization problems 
and introduces a distance structure on the space of constraint formulations consisting of equality and 
inequality constraints. Furthermore, focusing on LICQ as one of the regularity conditions that depends on the formulation, 
we define the LICQ radius as the minimum perturbation distance required for LICQ to fail at a given point. We interpret the LICQ radius 
as an indicator of the robustness of LICQ with respect to perturbations of the formulation and derive an explicit formula for it.

Constraint qualifications are regularity conditions imposed at feasible points. 
They play an important role in optimality conditions, multiplier theory, sensitivity analysis, 
and numerical algorithms \cite{BonnansShapiro2000,NocedalWright2006,RockafellarWets1998}. 
We focus on LICQ as a representative constraint qualification. At a feasible point $x$, 
LICQ requires the gradients of all equality constraints and all inequality constraints active at $x$ 
to be linearly independent. In particular, whenever KKT multipliers exist, LICQ implies their uniqueness. 
Because a perturbation of the formulation can change the set of active constraints—for instance, 
by causing a previously inactive inequality to become active—a quantitative robustness measure 
for LICQ must account for both the perturbations of active gradients and the possible activation of inactive constraints.

A simple example illustrates why LICQ is a property of the formulation and
not of the feasible set alone. Consider the open ball
\begin{equation}
U_1=\{x\in\mathbb R^2\mid x_1^2+x_2^2<1\}
\end{equation}
and the two inequality systems
\begin{align}
    P_1&:\quad x_1\leq 0,\quad x_2\leq 0,\\
    P_2&:\quad x_1\leq 0,\quad x_2\leq 0,\quad x_1+x_2\leq 0.
\end{align}
They define the same feasible set,
\begin{equation}
\{x\in U_1\mid x_1\leq 0,\ x_2\leq 0\}.
\end{equation}
At the origin, the active gradients of $P_1$ are
\begin{equation}
(1,0),\qquad (0,1),
\end{equation}
whereas those of $P_2$ are
\begin{equation}
(1,0),\qquad (0,1),\qquad (1,1).
\end{equation}
Thus LICQ holds for $P_1$ at the origin but fails for $P_2$, even though the
two systems have the same feasible set. Notice that these two formulations
belong to different typed-cardinality strata, since they have different
numbers of inequality constraints. The example therefore illustrates a
difference between formulations, rather than a distance between them in the
metric constructed in this paper.

We regard a finite system of equality and inequality constraints as a typed
multiset of functions. For each fixed number of equality and inequality
constraints, we equip the corresponding formulation space with a matching
distance induced by the norm of the underlying function space. This distance
is invariant under permutations within each constraint type, while preserving
the distinction and multiplicity of the constraints.

The metric is defined only within each fixed typed-cardinality stratum. We do
not introduce distances between formulations with different numbers of
equality or inequality constraints. Constraint additions and removals are
instead represented by maps between strata and are analyzed through their
effects on the pointwise LICQ radius.

For a feasible formulation and a reference point, the pointwise LICQ radius
measures the smallest same-stratum perturbation that preserves feasibility at
that point and causes LICQ to fail. It is therefore a local,
formulation-dependent measure of LICQ robustness. The radius depends on the
chosen function space, norm, and scaling of the constraint functions, and is
not intended to be an invariant of the feasible set.

At a fixed point, feasibility and LICQ depend only on the values and first
derivatives of the constraint functions. Under an appropriate first-order
evaluation assumption, the radius can therefore be studied in a
finite-dimensional first-order jet space. This leads to an explicit formula
that accounts for both the activation of inequality constraints and the
subsequent loss of linear independence among the active gradients.

The remainder of the paper develops this framework and analyzes the
pointwise LICQ radius. The main contributions are summarized below.

\subsection*{Contributions}

The main contributions of this paper are as follows.

\begin{itemize}

\item For every fixed pair $(p,q)$, we introduce the typed formulation space
$\mathcal C_{p,q}$ of systems with $p$ equality constraints and $q$
inequality constraints, modeled as a typed symmetric product. We equip this
space with a matching distance that is invariant under permutations of
constraints within each type while retaining constraint multiplicities and
constraint types
(Definition~\ref{def:constraint-matching-distance}).

\item For a formulation $P\in\mathcal C_{p,q}$ and a feasible point $x$, we
define the pointwise LICQ radius as the distance, within the same stratum
$\mathcal C_{p,q}$, to pointwise LICQ failure
(Definition~\ref{def:LICQrad}).

\item Under the first-order evaluation assumptions introduced in
Section~\ref{sec:radius}, we prove that the pointwise LICQ radius on
$\mathcal C_{p,q}$ agrees with the corresponding radius on a finite-dimensional
first-order jet formulation space
(Proposition~\ref{prop:equivalence-radii}).

\item We derive an explicit formula for the pointwise LICQ radius as a finite
minimum over subsets of inequality constraints
(Theorem~\ref{thm:explicit-radius}). Each term in the formula consists of the
cost of making the selected inequalities active at the reference point and
the cost of making the resulting active-gradient matrix rank deficient.

\item We analyze the effect of adding one equality or one inequality
constraint by means of augmentation maps between typed-cardinality strata.
The resulting radius formulas
(Theorems~\ref{thm:augmentation-ineq} and
\ref{thm:augmentation-eq}) follow as direct consequences of the explicit
radius formula.

\end{itemize}
\newpage
\subsection*{Organization of the paper}

The paper is organized as follows.
Section~\ref{sec:related} discusses related work on perturbation analysis,
constraint qualifications, distances to ill-posedness, and metric
regularity. Section~\ref{sec:preliminaries} fixes the notation, introduces
the function-space setting, and defines formulation spaces as typed symmetric
products. Section~\ref{sec:distance} defines matching distances on fixed
typed-cardinality strata and establishes their basic properties, including
stability properties of augmentation maps. Section~\ref{sec:radius}
introduces the pointwise LICQ radius and the first-order jet formulation
space. Sections~\ref{sec:Explicit} and~\ref{sec:augmentation} contain the
main results. Section~\ref{sec:Explicit} derives an explicit formula for the
pointwise LICQ radius under additional first-order assumptions, while
Section~\ref{sec:augmentation} analyzes the effect of adding a single
equality or inequality constraint.

\section{Related Works}\label{sec:related}
\subsection{Perturbation analysis and variational convergence in optimization}
\label{subsec:perturbation-analysis}

Perturbation and sensitivity analysis studies how the behavior of an
optimization problem depends on changes in its data. In nonlinear
programming, central objects include local minimizers, optimal values,
Lagrange multipliers, and KKT systems. Classical treatments include
Fiacco~\cite{Fiacco1983} and Bonnans and Shapiro~\cite{BonnansShapiro2000}.
Variational analysis places these questions in a broader setting through
notions such as set convergence, epi-convergence, and graphical convergence;
see Attouch~\cite{Attouch1984}, Attouch and Wets~\cite{AttouchWets1991},
and Rockafellar and Wets~\cite{RockafellarWets1998}.

Quantitative approaches in this direction measure discrepancies between
problem data and relate them to changes in solutions or other objects of
interest. For example, Royset~\cite{Royset2020Stability} obtains stability
and error estimates for optimization problems and generalized equations
using truncated Hausdorff distances between graphs and epigraphs.
Chieu, Trang, and Yen~\cite{ChieuTrangYen2025} study tilt stability of
local minimizers of nonlinear programs under the relaxed constant rank
constraint qualification. These works illustrate distinct stability
questions, whose answers depend on both the property under consideration
and the permitted perturbations.

The present paper studies the robustness of LICQ for a finite constraint
formulation at a prescribed feasible point. The formulation records the
individual constraint functions, their equality or inequality types, and
their multiplicities. Consequently, two formulations defining the same
feasible set need not have the same LICQ behavior. Our metric compares
these function data while identifying permutations within each constraint
type. The property whose robustness is measured is therefore specified
at the level of the constraint formulation: we seek the smallest
perturbation that preserves feasibility at the chosen point and destroys
linear independence of the active gradients. No objective function or
optimal solution is needed to define this quantity.

The use of a metric on problem data is consistent with the preceding
stability literature. Likewise, the matching construction uses standard
quotient-metric and optimal-assignment ideas. For a fixed positive
cardinality $r$, the distance on $\operatorname{Sym}^r(W)$ in
Section~\ref{sec:distance} equals $\sqrt r$ times the $2$-Wasserstein
distance between the corresponding uniform empirical measures, with
ground distance induced by $\|\cdot\|_W$; see
Peyr\'e and Cuturi~\cite{PeyreCuturi2019} for the discrete assignment
formulation. Here this construction is applied separately to equality
and inequality constraints. Its role is to provide a permutation-invariant
setting for quantifying LICQ failure while retaining the data on which
LICQ depends.

\subsection{Distances to ill-posedness and metric regularity}
\label{subsec:ill-posedness-metric-regularity}

Measuring robustness by the distance to failure is an established approach
to conditioning. Demmel~\cite{Demmel1987} studies the relationship between
condition numbers and distances to ill-posedness in numerical analysis.
Renegar~\cite{Renegar1995,Renegar2001} develops distance-based condition
theory for linear programming, and Freund and Vera~\cite{FreundVera1999}
study condition measures and distances to ill-posedness for conic linear
systems. The LICQ radius considered here follows this general principle,
with the failure set specified by feasibility and loss of LICQ at a
fixed point.

A particularly relevant comparison is the radius of metric regularity.
Dontchev, Lewis, and Rockafellar~\cite{DontchevLewisRockafellar2003}
study the smallest perturbation that destroys metric regularity at a
specified graph point and establish radius formulas for linear and
locally Lipschitz perturbations. Gfrerer and
Kruger~\cite{GfrererKruger2023} extend the exact formula for Lipschitz
perturbations to general Asplund spaces. The related radius of metric
subregularity is studied by Dontchev, Gfrerer, Kruger, and
Outrata~\cite{DontchevGfrererKrugerOutrata2020}.
These are already local radius theories. The distinction in the present
paper lies in the failure property and the perturbation model.

First, LICQ concerns linear independence of all equality gradients and
active inequality gradients. Its failure need not entail the failure of
weaker constraint qualifications. For example, the two inequalities
$t\le0$ and $t\le0$ fail LICQ at zero, whereas the direction $d=-1$
strictly decreases both constraints and hence verifies the
Mangasarian--Fromovitz constraint qualification. This separates the
failure property studied here from the regularity property of the
usual inequality constraint mapping.

Second, we measure perturbations by the $W$-norms of the constraint
functions and require the prescribed point to remain feasible.
In the locally Lipschitz radius formula of Dontchev, Lewis, and
Rockafellar~\cite{DontchevLewisRockafellar2003}, perturbation size is
measured by a local Lipschitz modulus, and the reference output is
shifted by the perturbation value. In our setting, the induced quotient
norm on first-order data accounts for changes in both function values
and gradients. Making a previously inactive inequality active therefore
has a cost determined by the same norm that measures the subsequent
gradient perturbation.

The specific results of this paper quantify that cost exactly.
Proposition~\ref{prop:equivalence-radii} reduces the LICQ radius in the
function space to the radius in a first-order jet space equipped with
its quotient norm. Under the Hilbert-space and surjectivity assumptions,
Theorem~\ref{thm:explicit-radius} then expresses the radius as a finite
minimum over subsets of inequalities. Each candidate combines an
activation cost with a weighted distance to row-rank deficiency; the
gradient matrix is corrected to account for coupling between values
and gradients in the quotient norm. The singular-value step uses
classical matrix approximation. The contribution is the resulting
exact distance formula for LICQ failure under function perturbations,
including changes in the active set, on the typed formulation space.
The constraint-addition formulas in
Theorems~\ref{thm:augmentation-ineq} and~\ref{thm:augmentation-eq}
are consequences of this characterization.

\section{Mathematical preliminaries and constraint formulations}
\label{sec:preliminaries}

This section introduces the basic framework used throughout the paper. We first
fix an ambient function space for constraint functions. We then define constraint
formulations as typed symmetric products of this function space. This construction
treats equality and inequality constraints as different types of functions, ignores the
ordering within each type, and retains multiplicities.

\subsection{Basic notation and definitions}

\begin{itemize}
\item For any positive integer $r$, $S_r$ denotes the symmetric group of degree $r$.

\item For any set $X$, we consider the natural action of $S_r$ on the direct
product $X^r$ by permutations. The quotient space is denoted by
\begin{equation}
    \operatorname{Sym}^r(X)=X^r/S_r.
\end{equation}
For any element $(x_1,\ldots,x_r)\in X^r$, its equivalence class is denoted by
\begin{equation}
    \overline{(x_1,\ldots,x_r)}.
\end{equation}

\item For a normed space $W$, $\|\cdot\|_W$ denotes the norm on $W$.

\item For a matrix $A$, $\sigma_{\min}(A)$ denotes the smallest singular value
of $A$, and $\|A\|_F$ denotes the Frobenius norm. The notation $A\succ0$
means that $A$ is symmetric positive definite, and $A\succeq0$ means that
$A$ is symmetric positive semidefinite.

\item For a constraint formulation $P$, $K(P)$ denotes the feasible set of $P$.
\end{itemize}

\subsection{Formulation Spaces}
\label{subsec:function-spaces}
\begin{assume}\label{as:func}
Let $n$ be a positive integer and $\Omega\subset\mathbb R^n$ be a bounded Lipschitz
domain. Throughout this paper, $W$ is a Banach space of real-valued functions on $\Omega$ with a continuous embedding $W \hookrightarrow C^1(\overline{\Omega})$. 
Thus there exists $C_\Omega>0$ such that
\begin{equation}
    \|f\|_{C^1(\overline{\Omega})}
    \le
    C_\Omega\|f\|_W
    \qquad (f\in W).
\end{equation}
\end{assume}
For standard background on functional
analysis, we refer the reader to Brezis~\cite{Brezis2011}.
\begin{ex}
The following are typical examples of spaces satisfying the Assumption~\ref{as:func}.

\begin{itemize}
\item $W=C^1(\overline\Omega)$ with its standard norm.

\item Sobolev spaces $W^{s,p}(\Omega)$ satisfying
\begin{equation}
    s>1+\frac{n}{p},
\end{equation}
under the usual Sobolev embedding assumptions. In this case,
$W^{s,p}(\Omega)\hookrightarrow C^1(\overline\Omega)$.

\item For $d\ge 1$, the polynomial space
\begin{equation}
    W=\mathbb R[x_1,\dots,x_n]_{\le d}
\end{equation}
of polynomials of degree at most $d$, equipped with any norm.
\end{itemize}
\end{ex}

In this paper, the notation related to symmetric products follows that of \cite{borsuk1931symmetric}.

We consider constraint systems on $\Omega$ of the form
\begin{equation}
    h_{\alpha}(x)=0,\quad \alpha=1,\ldots,p,
    \qquad
    g_i(x)\le 0,\quad i=1,\ldots,q,
\end{equation}
where $h_\alpha\in W,\ g_i\in W$.
The functions $h_\alpha$ are equality constraints, while the functions
$g_i$ are inequality constraints.

The ordering of equality constraints and the ordering of inequality
constraints carry no intrinsic meaning. Thus, two constraint systems that
differ only by a permutation of equality constraints or by a permutation of
inequality constraints are identified. However, multiplicities are retained, 
so repeated constraints are treated as separate instances of the same constraint.

For fixed nonnegative integers $p,q$, the $(p,q)$-stratum of constraint
formulations is defined by
\begin{equation}
    \mathcal C_{p,q}
    =
    \operatorname{Sym}^p(W)\times \operatorname{Sym}^q(W).
\end{equation}
Thus a point $P\in\mathcal C_{p,q}$ is an ordered pair
\begin{equation}
    P=(H_P,G_P),
\end{equation}
where $H_P\in\operatorname{Sym}^p(W),\ G_P\in\operatorname{Sym}^q(W)$.
Here $H_P$ is the collection of equality constraints and $G_P$ is the
collection of inequality constraints.

In this paper, we work on the quotient space but choose labelled representatives when needed. 
\subsection{Feasible sets, active sets, and LICQ}
\label{subsec:feasible-active-licq}

Let $P\in\mathcal C_{p,q}$ be written as
\begin{equation}
    P=(H_P,G_P),
\end{equation}
where
\begin{equation}
    H_P
    =
    \overline{(h_{P,1},\ldots,h_{P,p})},
    \qquad
    G_P
    =
    \overline{(g_{P,1},\ldots,g_{P,q})}.
\end{equation}
The feasible set associated with $P$ is denoted by
\begin{equation}
    K(P)
    =
    \{x\in \Omega \mid h(x)=0\ \text{for all }h\in H_P,
        \quad
        g(x)\le 0\ \text{for all }g\in G_P\}.
\end{equation}
The quantifiers over $H_P$ and $G_P$ are understood in the sense of unordered collections; repeated constraints impose the same scalar condition more than once
as part of the formulation, although they do not change the underlying
feasible set.

For $x\in K(P)$, and after choosing a labelled representative of $G_P$,
the active inequality index set is
\begin{equation}
    I_P(x)
    =
    \{i\in\{1,\ldots,q\} \mid g_{P,i}(x)=0\}.
\end{equation}

Equality constraints are always active. Therefore, for a prescribed active
index set $I\subset\{1,\ldots,q\}$, we define the active constraint map
using a labelled representative by
\begin{equation}
    c_{P,I}
    =
    (h_{P,1},\ldots,h_{P,p},g_{P,i_1},\dots,g_{P,i_{|I|}}).
\end{equation}
If $I=\{i_1,\ldots,i_{|I|}\}$, its Jacobian at $x$ is the
$(p+|I|)\times n$ matrix
\begin{equation}
    D c_{P,I}(x)
    =
    \begin{bmatrix}
        D h_{P,1}(x)\\
        \vdots\\
        D h_{P,p}(x)\\
        D g_{P,i_1}(x)\\
        \vdots\\
        D g_{P,i_{|I|}}(x)
    \end{bmatrix}.
\end{equation}
Here each $D h(x)$ and $D g(x)$ are regarded as a row vector in
$\mathbb R^n$.
\begin{dfn}
The linear independence constraint qualification (LICQ) holds
for $P$ at $x\in K(P)$ if
\begin{equation}
    \operatorname{rank} D c_{P,I_P(x)}(x) = p+|I_P(x)|.
\end{equation}
\end{dfn}

This condition is independent of the chosen labelled representative, since
permuting constraints only permutes the rows of the Jacobian matrix. In
particular, if $p+|I_P(x)|>n$, then LICQ necessarily fails at $x$.

All definitions and results below are invariant under permutations of the
equality constraints and under permutations of the inequality constraints.

\section{Distances on Formulation Spaces and Basic Properties}
\label{sec:distance}
In this section, distances are defined only on each fixed stratum
$\mathcal C_{p,q}$. 

Since equality and inequality constraints have different roles, they are
matched only with constraints of the same type. Thus the distance on a fixed
stratum is defined componentwise with respect to $P=(H_P,G_P)\in\mathcal C_{p,q}$.

\subsection{Distances on Formulation Spaces}
We first define the matching distance on a fixed symmetric product. Let
$r\in\mathbb N_0$. For $A=\overline{(\phi_1,\ldots,\phi_r)},\ B=\overline{(\psi_1,\ldots,\psi_r)}\in \operatorname{Sym}^r(W)$,
we define
\begin{equation}
\label{eq:sym-matching-distance}
    d_{\operatorname{Sym}^r(W)}(A,B)
    =
    \min_{\sigma\in S_r}
    \left(
        \sum_{i=1}^{r}
        \|\phi_i-\psi_{\sigma(i)}\|_W^2
    \right)^{1/2}.
\end{equation}
When $r=0$, $\operatorname{Sym}^0(W)=\{\varnothing\}$, and the distance
is identically zero.

\begin{prop}
\label{prop:sym-matching-metric}
For any $r\in\mathbb N_0$, the function $d_{\operatorname{Sym}^r(W)}$
defined by \eqref{eq:sym-matching-distance} is a metric on
$\operatorname{Sym}^r(W)$.
\end{prop}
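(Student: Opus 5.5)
First I would check that the definition \eqref{eq:sym-matching-distance} does not depend on the chosen labelled representatives, so that $d_{\operatorname{Sym}^r(W)}$ is a well-defined function on the quotient. If $A$ is represented instead by $(\phi_{\tau(1)},\ldots,\phi_{\tau(r)})$ and $B$ by $(\psi_{\rho(1)},\ldots,\psi_{\rho(r)})$ with $\tau,\rho\in S_r$, then the set of sums $\{\sum_i\|\phi_{\tau(i)}-\psi_{\rho(\sigma(i))}\|_W^2:\sigma\in S_r\}$ is the same as the original set. This follows by reindexing $i\mapsto\tau^{-1}(i)$ and replacing $\sigma$ by $\rho\circ\sigma\circ\tau^{-1}$, which is again a bijection of $S_r$. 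Hence the minimum, which exists because $S_r$ is finite, is unchanged. The case $r=0$ is trivial, so I will assume $r\ge1$ from now on.

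Next come the easy axioms. Nonnegativity is immediate. For symmetry, a permutation $\sigma$ attaining the value for $(A,B)$ gives, through $\sigma^{-1}$, the same sum for $(B,A)$, so the two minima coincide. For definiteness, suppose $d(A,B)=0$. Then some $\sigma$ satisfies $\phi_i=\psi_{\sigma(i)}$ for all $i$, since $\|\cdot\|_W$ is a norm. This means $(\phi_i)$ is a permutation of $(\psi_i)$, so $A=B$ in $\operatorname{Sym}^r(W)$. Conversely, $d(A,A)=0$ by taking $\sigma=\mathrm{id}$.

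The main step is the triangle inequality. Let $C=\overline{(\chi_1,\ldots,\chi_r)}$. Choose $\sigma$ optimal for $(A,B)$ and $\tau$ optimal for $(B,C)$, so that
\[
d(B,C)=\Bigl(\sum_i\|\psi_i-\chi_{\tau(i)}\|_W^2\Bigr)^{1/2}.
\]
Reindexing this sum by $j=\sigma(i)$ gives
\[
d(B,C)=\Bigl(\sum_i\|\psi_{\sigma(i)}-\chi_{\tau(\sigma(i))}\|_W^2\Bigr)^{1/2}.
\]
Now use the composite $\tau\circ\sigma$ as a competitor for $(A,C)$. The pointwise triangle inequality in $W$ gives
\[
\|\phi_i-\chi_{\tau\sigma(i)}\|_W\le a_i+b_i,
\qquad a_i=\|\phi_i-\psi_{\sigma(i)}\|_W,\quad b_i=\|\psi_{\sigma(i)}-\chi_{\tau\sigma(i)}\|_W.
\]
The Minkowski inequality in $\mathbb R^r$ with the Euclidean norm then yields
\[
d(A,C)\le\|a+b\|_2\le\|a\|_2+\|b\|_2=d(A,B)+d(B,C).
\]
The only delicate point is this reindexing by the composed permutation; everything else is routine.

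Alternatively, one could view $d$ as the quotient pseudometric of the $\ell^2$-product norm on $W^r$ under the isometric action of the finite group $S_r$. The triangle inequality then follows from the general fact that orbit distances of an isometric finite group action form a metric. I would nevertheless present the direct argument above.
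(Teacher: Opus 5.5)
Your proof is correct and complete. The well-definedness check, definiteness via $\phi_i=\psi_{\sigma(i)}$, and the triangle inequality via the composed permutation $\tau\circ\sigma$ together with Minkowski in $\mathbb R^r$ are all sound.

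This is essentially the paper's argument. The paper compresses everything into the one-line observation that $d_{\operatorname{Sym}^r(W)}$ is the quotient (orbit) distance of the $\ell^2$-product norm on $W^r$ under the finite isometric action of $S_r$. That is exactly the alternative you mention at the end, and your direct verification supplies the details the paper leaves implicit: independence of representatives, and definiteness, which relies on the orbits being finite and hence closed.
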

\begin{proof}
The distance defined in \ref{eq:sym-matching-distance} is the quotient distance induced by this
finite isometric group action.
\end{proof}
By Proposition~\ref{prop:sym-matching-metric}, we define a distance on $\mathcal C_{p,q}$

\begin{dfn}
\label{def:constraint-matching-distance}
Let $P=(H_P,G_P),\ Q=(H_Q,G_Q) \in \mathcal C_{p,q}$. 
Choose labelled representatives
\begin{equation}
    H_P=(h_{P,1},\ldots,h_{P,p}),
    \qquad
    H_Q=(h_{Q,1},\ldots,h_{Q,p}),
\end{equation}
and
\begin{equation}
    G_P=(g_{P,1},\ldots,g_{P,q}),
    \qquad
    G_Q=(g_{Q,1},\ldots,g_{Q,q}).
\end{equation}
We define
\begin{equation}
\label{eq:constraint-distance-product}
    d_{p,q}(P,Q)^2 = d_{\operatorname{Sym}^p(W)}(H_P,H_Q)^2 + d_{\operatorname{Sym}^q(W)}(G_P,G_Q)^2.
\end{equation}
\end{dfn}
Thus equality constraints are matched only with equality constraints, and
inequality constraints are matched only with inequality constraints. The cost
of matching two constraint functions is their $W$-norm distance, and the
total cost is computed by an optimal matching.
\begin{prop}
\label{prop:constraint-stratum-metric}
For any $p,q\in\mathbb N_0$, the tuple $(\mathcal C_{p,q},d_{p,q})$
is a metric space.
\end{prop}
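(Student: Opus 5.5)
The plan is to reduce the statement to Proposition~\ref{prop:sym-matching-metric} together with the elementary fact that an $\ell^2$-product of two metric spaces is again a metric space. First I will check that $d_{p,q}$ is well defined. By \eqref{eq:sym-matching-distance}, each component $d_{\operatorname{Sym}^p(W)}(H_P,H_Q)$ and $d_{\operatorname{Sym}^q(W)}(G_P,G_Q)$ is a minimum over all permutations. Changing the labelled representative of $H_P$ or $H_Q$ replaces $\sigma$ by a composition with fixed permutations, which is a bijection of $S_p$, so the minimum is unchanged; the same holds for $G_P,G_Q$. Hence the right-hand side of \eqref{eq:constraint-distance-product} depends only on $P$ and $Q$, and $d_{p,q}\ge 0$ is finite. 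When $p=0$ or $q=0$ the corresponding term vanishes identically, and the argument below still applies.

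Next I will verify the metric axioms. \emph{Symmetry} follows from the symmetry of each component metric, which is part of Proposition~\ref{prop:sym-matching-metric}. For \emph{definiteness}, suppose $d_{p,q}(P,Q)=0$. Then both squared components vanish, so $H_P=H_Q$ in $\operatorname{Sym}^p(W)$ and $G_P=G_Q$ in $\operatorname{Sym}^q(W)$ by Proposition~\ref{prop:sym-matching-metric}. Therefore $P=Q$ as ordered pairs; the converse is immediate.

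The only step with any content is the \emph{triangle inequality}. For $P,Q,R\in\mathcal C_{p,q}$, set $a_1=d_{\operatorname{Sym}^p(W)}(H_P,H_Q)$ and $a_2=d_{\operatorname{Sym}^q(W)}(G_P,G_Q)$, and define $b_1,b_2$ analogously for the pair $(Q,R)$. The triangle inequality in each factor gives componentwise bounds $d_{\operatorname{Sym}^p(W)}(H_P,H_R)\le a_1+b_1$ and $d_{\operatorname{Sym}^q(W)}(G_P,G_R)\le a_2+b_2$. Combining these with Minkowski's inequality in $\mathbb R^2$, namely $\|(a_1+b_1,a_2+b_2)\|_2\le\|(a_1,a_2)\|_2+\|(b_1,b_2)\|_2$, yields
\begin{equation}
d_{p,q}(P,R)\le d_{p,q}(P,Q)+d_{p,q}(Q,R).
\end{equation}

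I expect no real obstacle here. The only subtle point is the well-definedness under the choice of representatives, and that is handled by the reindexing of permutations described in the first paragraph.
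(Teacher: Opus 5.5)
Your proposal is correct and follows the same route as the paper: the paper simply notes that $\mathcal C_{p,q}=\operatorname{Sym}^p(W)\times\operatorname{Sym}^q(W)$ is a product of two metric spaces by Proposition~\ref{prop:sym-matching-metric}, and concludes that $d_{p,q}$ is a metric. You additionally supply the details the paper leaves implicit, namely independence of the choice of representatives and the Minkowski step showing that the $\ell^2$-combination of the two factor metrics satisfies the triangle inequality.
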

\begin{proof}
The space $\mathcal C_{p,q}$ is the product
\begin{equation}
    \operatorname{Sym}^p(W)\times\operatorname{Sym}^q(W).
\end{equation}
By Proposition~\ref{prop:sym-matching-metric}, each factor is a metric
space. Hence $d_{p,q}$ is a metric.
\end{proof}

We next introduce the operation of adding one constraint function. This
operation moves a formulation from one stratum to an adjacent stratum.

For $\phi\in W$, define
\begin{equation}
    \operatorname{add}^{=}_{\phi}:
    \mathcal C_{p,q}\to\mathcal C_{p+1,q}
\end{equation}
and
\begin{equation}
    \operatorname{add}^{\le}_{\phi}:
    \mathcal C_{p,q}\to\mathcal C_{p,q+1}
\end{equation}
by
\begin{equation}
\label{eq:add-equality-constraint}
    \operatorname{add}^{=}_{\phi}((H_P,G_P))
    =
    \left(
        \overline{(h_{P,1},\dots,h_{P,p},\phi)},
        G_P
    \right),
\end{equation}
and
\begin{equation}
\label{eq:add-inequality-constraint}
    \operatorname{add}^{\le}_{\phi}((H_P,G_P))
    =
    \left(
        H_P,
        \overline{(g_{P,1},\dots,g_{P,q},\phi)}
    \right).
\end{equation}

We write
\begin{equation}
    P^{\phi=}
    =
    \operatorname{add}^{=}_{\phi}(P),
    \qquad
    P^{\phi\le}
    =
    \operatorname{add}^{\le}_{\phi}(P).
\end{equation}
Thus $P^{\phi=}$ is obtained from $P$ by adding $\phi=0$ as an equality
constraint, while $P^{\phi\le}$ is obtained from $P$ by adding $\phi\le 0$ as an inequality constraint.

\begin{prop}
\label{prop:add-constraint-lipschitz}
For $\phi\in W$, the maps
\begin{equation}
    \operatorname{add}^{=}_{\phi}:
    \mathcal C_{p,q}\to\mathcal C_{p+1,q},
    \qquad
    \operatorname{add}^{\le}_{\phi}:
    \mathcal C_{p,q}\to\mathcal C_{p,q+1}
\end{equation}
are $1$-Lipschitz with respect to the matching distances, that is,
for all $P,Q\in\mathcal C_{p,q}$,
\begin{equation}
    d_{p+1,q}(P^{\phi=},Q^{\phi=})
    \le
    d_{p,q}(P,Q),
\end{equation}
and
\begin{equation}
    d_{p,q+1}(P^{\phi\le},Q^{\phi\le})
    \le
    d_{p,q}(P,Q).
\end{equation}
\end{prop}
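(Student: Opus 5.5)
The plan is to exhibit an explicit admissible matching for the augmented formulations whose cost equals that of an optimal matching for the original ones. The distance $d_{p+1,q}$ (resp.\ $d_{p,q+1}$) is a minimum over permutations, so any particular permutation gives an upper bound.

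We treat the equality case; the inequality case is identical with the roles of $H$ and $G$ exchanged.

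\textbf{Step 1: fix optimal permutations.} Choose labelled representatives $(h_{P,1},\ldots,h_{P,p})$, $(h_{Q,1},\ldots,h_{Q,p})$ and $(g_{P,1},\ldots,g_{P,q})$, $(g_{Q,1},\ldots,g_{Q,q})$. Since $S_p$ and $S_q$ are finite, the minima in \eqref{eq:sym-matching-distance} are attained. Pick $\sigma\in S_p$ and $\tau\in S_q$ attaining $d_{\operatorname{Sym}^p(W)}(H_P,H_Q)$ and $d_{\operatorname{Sym}^q(W)}(G_P,G_Q)$ respectively. If $p=0$ or $q=0$, the corresponding term is zero by convention and the argument below still applies.

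\textbf{Step 2: extend the equality permutation.} The equality parts of $P^{\phi=}$ and $Q^{\phi=}$ have representatives $(h_{P,1},\ldots,h_{P,p},\phi)$ and $(h_{Q,1},\ldots,h_{Q,p},\phi)$. Define $\tilde\sigma\in S_{p+1}$ by $\tilde\sigma(i)=\sigma(i)$ for $i\le p$ and $\tilde\sigma(p+1)=p+1$. The added pair then contributes $\|\phi-\phi\|_W^2=0$, which gives
\begin{equation}
    d_{\operatorname{Sym}^{p+1}(W)}\bigl(H_{P^{\phi=}},H_{Q^{\phi=}}\bigr)^2
    \le
    \sum_{i=1}^{p}\|h_{P,i}-h_{Q,\sigma(i)}\|_W^2
    =
    d_{\operatorname{Sym}^p(W)}(H_P,H_Q)^2 .
\end{equation}

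\textbf{Step 3: sum the two parts.} The inequality parts of $P^{\phi=}$ and $Q^{\phi=}$ are $G_P$ and $G_Q$, unchanged, so their contribution is exactly $d_{\operatorname{Sym}^q(W)}(G_P,G_Q)^2$. Adding this to the bound from Step 2 and applying \eqref{eq:constraint-distance-product} yields $d_{p+1,q}(P^{\phi=},Q^{\phi=})\le d_{p,q}(P,Q)$.

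The only point needing care is well-definedness. The maps $\operatorname{add}^{=}_{\phi}$ and $\operatorname{add}^{\le}_{\phi}$ do not depend on the chosen representatives, because permuting the first $p$ (or $q$) entries just permutes a tuple in $S_{p+1}$ (or $S_{q+1}$). The bound is computed entirely through representatives, so it holds for the classes themselves. There is no genuine obstacle here.

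Equality actually holds in both inequalities, but the reverse direction is not needed for the statement.
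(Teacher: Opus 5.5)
Your proof is correct and is the natural argument; the paper states this proposition without writing out a proof. Extending an optimal $\sigma\in S_p$ by $\tilde\sigma(p+1)=p+1$ gives an admissible matching for the augmented formulations whose cost is exactly $d_{p,q}(P,Q)$.

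Your closing remark that equality always holds is false. For any $u\in W\setminus\{0\}$, let $P$ and $Q$ have the same inequality part and single equality constraints $0$ and $2u$, and add $\phi=u$. Then $d_{1,q}(P,Q)=2\|u\|_W$. Matching $(0,u)$ with $(2u,u)$ crosswise costs $\|u\|_W^2+\|u\|_W^2$, so $d_{2,q}(P^{\phi=},Q^{\phi=})=\sqrt2\,\|u\|_W<2\|u\|_W$. The added constraint can act as an intermediate point in the optimal matching, so the augmentation maps need not be isometries. This does not affect your proof of the stated inequality, but the remark should be removed.
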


\begin{figure}[ht]
\centering
\begin{tikzpicture}[scale=1.0, font=\small]
    % Axes
    \draw[-{Stealth}, thick] (0,0) -- (6.5,0) node[right] {$q$};
    \draw[-{Stealth}, thick] (0,0) -- (0,5.5) node[above] {$p$};
    \node[below] at (3.2,-0.6) {Number of inequality constraints};
    \node[rotate=90] at (-0.8,2.7) {Number of equality constraints};

    % Grid strata
    \foreach \p in {0,1,2,3,4} {
        \foreach \q in {0,1,2,3,4,5} {
            \draw[fill=blue!5, draw=blue!30]
                (\q+0.15,\p+0.15) rectangle (\q+0.85,\p+0.85);
        }
    }

    % Labels
    \node at (0.5,0.5) {$\mathcal C_{0,0}$};
    \node at (1.5,0.5) {$\mathcal C_{0,1}$};
    \node at (0.5,1.5) {$\mathcal C_{1,0}$};
    \node at (3.5,2.5) {$\mathcal C_{2,3}$};
    \node at (4.5,2.5) {$\mathcal C_{2,4}$};
    \node at (3.5,3.5) {$\mathcal C_{3,3}$};

    % Horizontal arrow: add inequality
    \draw[-{Stealth}, thick, red, dashed] (3.85,2.5) -- (4.2,2.5);
    \node[red, above] at (4.2,1.75) {\scriptsize add inequality};

    % Vertical arrow: add equality
    \draw[-{Stealth}, thick, red, dashed] (3.5,2.85) -- (3.5,3.2);
    \node[red, left] at (3.45,3.0) {\scriptsize add equality};
\end{tikzpicture}
\caption{Stratification of the constraint formulation space by the number
$p$ of equality constraints and the number $q$ of inequality constraints.}
\label{fig:pq-stratification}
\end{figure}

\subsection{Basic Properties}
\label{sec:stab}

In this section, we record elementary stability properties of the matching
metric on $\mathcal C_{p,q}$. The purpose is to clarify
what is stable under small perturbations of a constraint formulation.
Because equality constraints are present, feasibility of a fixed point is
not, in general, preserved under arbitrary perturbations. What is preserved
is almost feasibility, together with the strict inactivity of inactive
inequality constraints.

Throughout this section, we fix $p,q\in\mathbb N_0$ and work on the
stratum
\begin{equation}
    \mathcal C_{p,q}
    =
    \operatorname{Sym}^p(W)\times\operatorname{Sym}^q(W).
\end{equation}
We write
\begin{equation}
    P=(H_P,G_P)\in\mathcal C_{p,q},
\end{equation}
where
\begin{equation}
    H_P=\overline{(h_{P,1},\ldots,h_{P,p})},
    \qquad
    G_P=\overline{(g_{P,1},\ldots,g_{P,q})},
\end{equation}
after choosing a labelled representative. All statements below are invariant
under permutations of the equality constraints and under permutations of the
inequality constraints.

\begin{prop}
\label{prop:almost-feasible}
Let $P=(H_P,G_P)\in\mathcal C_{p,q}$ and $x\in K(P)$. Then, for every
$\varepsilon>0$, there exists $\delta>0$ such that every
$Q=(H_Q,G_Q)\in\mathcal C_{p,q}$ with $d_{p,q}(P,Q)<\delta$
admits labelled representatives
\begin{equation}
    H_P=(h_{P,1},\ldots,h_{P,p}),
    \qquad
    H_Q=(h_{Q,1},\ldots,h_{Q,p}),
\end{equation}
and
\begin{equation}
    G_P=(g_{P,1},\ldots,g_{P,q}),
    \qquad
    G_Q=(g_{Q,1},\ldots,g_{Q,q}),
\end{equation}
such that
\begin{equation}
    |h_{Q,\alpha}(x)|\le \varepsilon,
    \qquad
    \alpha=1,\ldots,p,
\end{equation}
and
\begin{equation}
    g_{Q,j}(x)\le \varepsilon,
    \qquad
    j=1,\ldots,q.
\end{equation}
\end{prop}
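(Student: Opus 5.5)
The plan is to take $\delta=\varepsilon/C_\Omega$, with $C_\Omega$ the embedding constant from Assumption~\ref{as:func}, and to realize $d_{p,q}(P,Q)$ by optimal matchings. Since $S_p$ and $S_q$ are finite, the minima in \eqref{eq:sym-matching-distance} are attained. Fix any labelled representatives of $H_P,H_Q,G_P,G_Q$. Choose $\sigma\in S_p$ attaining $d_{\operatorname{Sym}^p(W)}(H_P,H_Q)$ and $\tau\in S_q$ attaining $d_{\operatorname{Sym}^q(W)}(G_P,G_Q)$. Then relabel the representatives of $H_Q$ and $G_Q$ by $\sigma$ and $\tau$, so that the identity permutation is optimal in both components. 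With these representatives,
\begin{equation}
  \sum_{\alpha=1}^p\|h_{P,\alpha}-h_{Q,\alpha}\|_W^2+\sum_{j=1}^q\|g_{P,j}-g_{Q,j}\|_W^2 = d_{p,q}(P,Q)^2<\delta^2 .
\end{equation}
In particular, each summand is individually below $\delta^2$.

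Next, I transfer the $W$-norm bounds to pointwise bounds at $x$. For any $f\in W$ and $x\in\Omega$,
\begin{equation}
|f(x)|\le\|f\|_{C^1(\overline\Omega)}\le C_\Omega\|f\|_W .
\end{equation}
Applied to $f=h_{Q,\alpha}-h_{P,\alpha}$ this gives $|h_{Q,\alpha}(x)-h_{P,\alpha}(x)|<C_\Omega\delta=\varepsilon$. Since $x\in K(P)$, we have $h_{P,\alpha}(x)=0$, and hence $|h_{Q,\alpha}(x)|\le\varepsilon$. The same estimate applied to $g_{Q,j}-g_{P,j}$ gives
\begin{equation}
g_{Q,j}(x)\le g_{P,j}(x)+\varepsilon\le\varepsilon,
\end{equation}
because $g_{P,j}(x)\le 0$ by feasibility.

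I expect no real obstacle. The only point needing care is bookkeeping: the labelled representatives must be chosen as an \emph{optimal} matching, not arbitrary ones, so that each matched pair, not just some pair, is close in $W$. This is exactly why the statement asserts only the existence of suitable representatives. Note also that $\delta$ depends only on $\varepsilon$ and $C_\Omega$, not on $P$ or $x$, so the estimate is uniform in both.
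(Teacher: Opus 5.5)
Your proposal is correct and follows essentially the same route as the paper: you realize $d_{p,q}(P,Q)$ by an optimal labelled matching, bound each matched pair by $d_{p,q}(P,Q)$, transfer this to a pointwise bound at $x$ via the embedding constant $C_\Omega$, and then use $h_{P,\alpha}(x)=0$ and $g_{P,j}(x)\le 0$. Your choice $\delta=\varepsilon/C_\Omega$ works because $d_{p,q}(P,Q)<\delta$ is strict, and your remark that $\delta$ does not depend on $P$ or $x$ is accurate.
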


\begin{proof}
By the definition of the matching distance $d_{p,q}$, there exist
permutations of the equality and inequality constraints of $Q$ such that,
after relabelling,
\begin{equation}
    \sum_{\alpha=1}^p
    \|h_{P,\alpha}-h_{Q,\alpha}\|_W^2
    +
    \sum_{i=1}^q
    \|g_{P,i}-g_{Q,i}\|_W^2
    =
    d_{p,q}(P,Q)^2.
\end{equation}
Hence, for each $\alpha$,
\begin{equation}
    \|h_{P,\alpha}-h_{Q,\alpha}\|_W
    \le d_{p,q}(P,Q).
\end{equation}
By the continuous embedding $W\hookrightarrow C^0(\overline\Omega)$,
\begin{equation}
    |h_{P,\alpha}(x)-h_{Q,\alpha}(x)|
    \le C_{\Omega} d_{p,q}(P,Q).
\end{equation}
Since $x\in K(P)$, we have $h_{P,\alpha}(x)=0$ for all $h_{P,\alpha}\in H_P$.
Therefore we have
\begin{equation}
    |h_{Q,\alpha}(x)|
    \le C_{\Omega} d_{p,q}(P,Q).
\end{equation}
Taking $\delta<\varepsilon/C_{\Omega}$, we have
\begin{equation}
    |h_{Q,\alpha}(x)|\le \varepsilon.
\end{equation}
Similarly, since $g_{P,j}(x)\le 0$ for all $j=1,\ldots,q$, we obtain
\begin{equation}
    g_{Q,j}(x)\le \varepsilon,
    \qquad j=1,\ldots,q.
\end{equation}
\end{proof}
\begin{prop}
\label{cor:inactive-stability}
Let $P=(H_P, G_P)\in\mathcal C_{p,q}$ and $x\in K(P)$.
Suppose that $\gamma=\min_{j\notin I_P(x)}\{-g_{P,j}(x)\}>0$ with the convention that $\gamma=+\infty$ if $I_P(x)=\{1,\ldots,q\}$.
Then there exists $\delta>0$ such that every
$Q\in\mathcal C_{p,q}$ with $d_{p,q}(P,Q)<\delta$ admits a relabelling for which
\begin{equation}
    g_{Q,j}(x)<0
    \qquad
    \text{for all }j\notin I_P(x).
\end{equation}

\end{prop}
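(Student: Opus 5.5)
The plan is to mimic the argument of Proposition~\ref{prop:almost-feasible}, replacing the weak target bound $g_{Q,j}(x)\le\varepsilon$ by a strict one derived from the margin $\gamma$. If $I_P(x)=\{1,\ldots,q\}$, there is nothing to prove (the condition is vacuous), so any $\delta>0$ works. Otherwise $\gamma$ is a finite positive number, and I will take
\begin{equation}
    \delta=\frac{\gamma}{C_\Omega},
\end{equation}
where $C_\Omega$ is the embedding constant from Assumption~\ref{as:func}.

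Given $Q$ with $d_{p,q}(P,Q)<\delta$, fix a labelled representative of $P$, with respect to which $I_P(x)$ is defined. Since the minimum in \eqref{eq:sym-matching-distance} is attained over the finite group $S_q$, I will choose a relabelling of the inequality constraints of $Q$ that realizes the optimal matching with this representative, and do the same for the equalities. For this relabelling,
\begin{equation}
    \|g_{P,j}-g_{Q,j}\|_W\le d_{\operatorname{Sym}^q(W)}(G_P,G_Q)\le d_{p,q}(P,Q)<\delta
\end{equation}
for every $j$. The embedding $W\hookrightarrow C^1(\overline\Omega)\hookrightarrow C^0(\overline\Omega)$ then gives $|g_{P,j}(x)-g_{Q,j}(x)|\le C_\Omega\|g_{P,j}-g_{Q,j}\|_W<\gamma$.

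For $j\notin I_P(x)$ we have $g_{P,j}(x)\le-\gamma$ by the definition of $\gamma$. Hence
\begin{equation}
    g_{Q,j}(x)<g_{P,j}(x)+\gamma\le 0.
\end{equation}

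The only point needing care is that the relabelling of $Q$ must be taken relative to the \emph{fixed} labelling of $P$ used to define $I_P(x)$, and not relabel $P$ as well. Since the matching permutes only the entries of $Q$, this is harmless. The strict inequality comes from $d_{p,q}(P,Q)<\delta$ being strict, so no extra slack in $\delta$ is needed.
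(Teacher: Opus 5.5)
Your proof is correct and follows the paper's argument: relabel $Q$ by an optimal matching, use the embedding bound $|g_{P,j}(x)-g_{Q,j}(x)|\le C_\Omega\, d_{p,q}(P,Q)$, and take $\delta=\gamma/C_\Omega$ so that $g_{Q,j}(x)\le -\gamma + C_\Omega\, d_{p,q}(P,Q)<0$ for every inactive $j$. Your separate treatment of the vacuous case $\gamma=+\infty$ is a small improvement, since the paper's choice $\delta=\gamma/C_\Omega$ is not a positive real number in that case.
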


\begin{proof}
After relabelling $Q$ by an optimal matching as in
Proposition~\ref{prop:almost-feasible}, we have
\begin{equation}
    |g_{P,j}(x)-g_{Q,j}(x)|
    \le C_{\Omega} d_{p,q}(P,Q)
\end{equation}
for all $j=1,\ldots,q$. For $j\notin I_P(x)$,
\begin{equation}
    g_{P,j}(x)\le -\gamma.
\end{equation}
Therefore
\begin{equation}
    g_{Q,j}(x)
    \le
    g_{P,j}(x)+C_{\Omega}d_{p,q}(P,Q)
    \le
    -\gamma+C_{\Omega}d_{p,q}(P,Q).
\end{equation}
If $d_{p,q}(P,Q)<\gamma/C_{\Omega}$, then $g_{Q,j}(x)<0$ for all $j\notin  I_P(x)$.
\end{proof}

\section{The LICQ Radius and Jet Formulation Spaces}
\label{sec:radius}
In this section, we introduce the LICQ radius and the Jet space associated with the formulation space $\mathcal{C}_{p,q}$. 

The LICQ radius serves as a measure of the robustness of the LICQ condition for a given problem and its solution. 
While the direct computation of this radius on $\mathcal C_{p,q}$ is generally
difficult from the definition, LICQ is fundamentally a local property of
optimization problems. 
By employing the jet space, which extracts only the local information of functions and their gradients at the reference point, the calculation of the LICQ radius can be reduced to matrix computations. 
In particular, we show that the LICQ radius on the Jet space coincides with the radius on the original space $\mathcal{C}_{p,q}$.
An element $P$ of $\mathcal C_{p,q}$ will be written as
\begin{equation}
    P=(H_P,G_P),
\end{equation}
where
\begin{equation}
    H_P=\overline{(h_1,\ldots,h_p)},
    \qquad
    G_P=\overline{(g_1,\ldots,g_q)},
\end{equation}
with the understanding that the equality and inequality families are unordered
within their own types.

We define the pointwise LICQ radius as the distance to LICQ failure
\begin{dfn}\label{def:LICQrad}
\begin{equation}
    r_{\mathrm{LICQ},W}^{p,q}(P,x)
    =
    \inf\left\{
        d_{p,q}(P,Q)
        \mid
        x\in K(Q)
        \text{ and } Q \text{ does not satisfy LICQ at } x
    \right\}.
\end{equation}
If the set in the infimum is empty, the value is understood to be $+\infty$.
\end{dfn}
\subsection{Jet spaces and quotient norms}\label{subsec:Jet}
We use a simplified, pointwise version of the theory of first-order jets; for
background on jet bundles, see \cite{Kolar1993} or \cite{Saunders1989}. In the present paper, we
only need the jet space over a fixed point $x$, rather than the full jet
bundle over the domain.

In the remainder of the paper, we derive explicit formulas for the LICQ
radius, which measures the robustness of LICQ under perturbations of the
constraint functions. Since the active status of an inequality constraint and
the linear independence condition in LICQ are determined, at a fixed point
$x$, by the values and gradients of the constraint functions, it is sufficient
to retain only their first-order Taylor data at $x$. Thus, instead of working
with the entire function space $W$, we pass to the first-order jet space at
$x$.

Concretely, the first-order jet records the information appearing in the
first-order Taylor polynomial
\begin{equation}
    f(x)+\nabla f(x)^\top (y-x).
\end{equation}
The simplest model is the space of affine functions
\begin{equation}
    f(y)=a+\xi^\top(y-x),
\end{equation}
where the pair $(a,\xi)$ is exactly the first-order jet at $x$. The quotient
jet space introduced below formalizes this idea for a general function space
$W$.
\subsubsection{Jet spaces}\label{subsubsec:Jet}
For a fixed point $x\in\overline\Omega$, let
\begin{equation}
    J_x=J_x^1(W)=W/\mathcal N_x^1,
    \qquad
    \mathcal N_x^1
    =
    \{f\in W \mid f(x)=0,\ \nabla f(x)=0\}.
\end{equation}
Since the embedding $W\hookrightarrow C^1(\overline\Omega)$ is continuous,
$\mathcal N_x^1$ is closed in $W$.
We endow $J_x$ with the quotient norm
\begin{equation}
    \|\Pi_x^1(f)\|_{J_x}
    =
    \inf\{\|u\|_W \mid u-f\in\mathcal N_x^1\}.
\end{equation}

Set
\begin{equation}
    \mathcal C_{p,q}^J(x)
    =
    \operatorname{Sym}^p(J_x)\times \operatorname{Sym}^q(J_x).
\end{equation}
We denote by
\begin{equation}
    \Pi_x^1:W\to J_x
\end{equation}
the natural quotient map and by
\begin{equation}
    \Pi_x^{1,(p,q)}:W^p\times W^q\to J_x^p\times J_x^q
\end{equation}
its componentwise extension.

Let
\begin{equation}
    T_{p,q}:W^p\times W^q\to \mathcal C_{p,q}
\end{equation}
be the natural quotient map to the typed symmetric product, and let
\begin{equation}
    T_{p,q}^J:J_x^p\times J_x^q\to \mathcal C_{p,q}^J(x)
\end{equation}
be the corresponding quotient map on the jet side.

Since $\Pi_x^{1,(p,q)}$ is equivariant with respect to the action of
$S_p\times S_q$, it induces a well-defined map
\begin{equation}
    \widehat\Pi_x^1:\mathcal C_{p,q}\to \mathcal C_{p,q}^J(x)
\end{equation}
such that the following diagram commutes:
\begin{equation}
\begin{tikzcd}
W^p\times W^q
    \arrow[r, "\Pi_x^{1,(p,q)}"]
    \arrow[d, "T_{p,q}"']
&
J_x^p\times J_x^q
    \arrow[d, "T_{p,q}^J"]
\\
\mathcal C_{p,q}
    \arrow[r, "\widehat\Pi_x^1"']
&
\mathcal C_{p,q}^J(x).
\end{tikzcd}
\end{equation}
Hence, the induced distance on $\mathcal C_{p,q}^J(x)$ is defined by
\begin{equation}
    \widetilde d_{p,q}(Z_1,Z_2)
    =
    \inf\left\{
        d_{p,q}(P,Q)
        \ \middle|\
        \widehat\Pi_x^1(P)=Z_1,\ 
        \widehat\Pi_x^1(Q)=Z_2
    \right\}.
\end{equation}
\begin{lem}
The quotient norm on $J_x$ induces a matching metric
$\widetilde d_{p,q}$ on $\mathcal C_{p,q}^J(x)$.
With this metric, $\mathcal C_{p,q}^J(x)$ is a metric space. Moreover, for every
$P\in \mathcal C_{p,q}$ and every $Z\in \mathcal C_{p,q}^J(x)$,
\begin{equation}
    \widetilde d_{p,q}(\widehat\Pi_x^1(P),Z)
    =
    \inf\left\{
        d_{p,q}(P,Q)
        \mid
        \widehat\Pi_x^1(Q)=Z
    \right\}.
\end{equation}
\end{lem}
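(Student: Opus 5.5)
The plan is to identify $\widetilde d_{p,q}$ with the matching distance built from the quotient norm $\|\cdot\|_{J_x}$ by the same formula \eqref{eq:sym-matching-distance}. Metric properties then follow from Proposition~\ref{prop:sym-matching-metric} applied with $W$ replaced by the normed space $J_x$, together with Proposition~\ref{prop:constraint-stratum-metric}. Since $\mathcal N_x^1$ is closed, $\|\cdot\|_{J_x}$ is a genuine norm, so $J_x$ is a normed space (indeed Banach). The proof of Proposition~\ref{prop:sym-matching-metric} uses only the norm structure, so it applies verbatim. Let $d^J_{p,q}$ denote the matching distance on $\mathcal C^J_{p,q}(x)$ defined with $\|\cdot\|_{J_x}$. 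I will prove
\begin{equation}
\widetilde d_{p,q}(Z_1,Z_2)=d^J_{p,q}(Z_1,Z_2)=\inf\{d_{p,q}(P,Q)\mid \widehat\Pi_x^1(Q)=Z_2\}
\end{equation}
for any fixed $P$ with $\widehat\Pi^1_x(P)=Z_1$. This yields both claims at once.

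\emph{Lower bound.} Take any $P,Q$ projecting to $Z_1,Z_2$, with labelled representatives and an optimal matching $\sigma$ (and $\tau$ for inequalities). Since $\Pi_x^1$ is linear and $\|\Pi_x^1 f\|_{J_x}\le\|f\|_W$, each term satisfies $\|\Pi^1_x h_{P,\alpha}-\Pi^1_x h_{Q,\sigma(\alpha)}\|_{J_x}\le \|h_{P,\alpha}-h_{Q,\sigma(\alpha)}\|_W$. Summing gives $d^J_{p,q}(Z_1,Z_2)\le d_{p,q}(P,Q)$. Taking the infimum over $Q$ (or over both $P$ and $Q$) gives $d^J_{p,q}\le$ each of the two infima.

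\emph{Upper bound with $P$ fixed.} This is the key step. Fix $P$ and its labelled representative $(h_{P,\alpha}),(g_{P,i})$. Choose jet representatives $(\eta_\alpha),(\zeta_i)$ of $Z_2$ and permutations attaining $d^J_{p,q}(Z_1,Z_2)$; the minimum exists since the groups are finite. Let $\varepsilon>0$. For each $\alpha$, the definition of the quotient norm gives $u_\alpha\in W$ with $\Pi_x^1 u_\alpha=\eta_{\sigma(\alpha)}-\Pi^1_x h_{P,\alpha}$ and $\|u_\alpha\|_W\le\|\eta_{\sigma(\alpha)}-\Pi^1_xh_{P,\alpha}\|_{J_x}+\varepsilon$. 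Set $h_{Q,\sigma(\alpha)}=h_{P,\alpha}+u_\alpha$, and do the same for the inequalities. The resulting $Q$ satisfies $\widehat\Pi_x^1(Q)=Z_2$ and
\begin{equation}
d_{p,q}(P,Q)^2\le\sum_\alpha(\|\cdot\|_{J_x}+\varepsilon)^2+\sum_i(\|\cdot\|_{J_x}+\varepsilon)^2 .
\end{equation}
Letting $\varepsilon\to0$ gives $\inf\{d_{p,q}(P,Q)\mid \widehat\Pi^1_x(Q)=Z_2\}\le d^J_{p,q}(Z_1,Z_2)$. Since this infimum is also at least $\widetilde d_{p,q}(Z_1,Z_2)$, combining with the lower bound closes the chain of equalities.

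The main subtlety is exactly this lifting step. The infimum defining $\widetilde d_{p,q}$ ranges over both endpoints, while the displayed identity fixes $P$. The point is that the deficit can always be absorbed by perturbing $P$ alone, and this works because the quotient norm is translation invariant along fibres: any representative of the difference class can be added to $P$. One must also check that $\widehat\Pi^1_x$ is surjective, which holds since $\Pi^1_x$ is, so the infima are over nonempty sets. Finally, the identity shows that $\widetilde d_{p,q}$ does not depend on the choice of lift $P$ of $Z_1$.
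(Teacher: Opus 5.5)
Your proof is correct, but it takes a different route from the paper's. The paper proves only the displayed identity, using a translation trick carried out entirely in $W$. Given any admissible pair $(P',Q)$ with $\widehat\Pi_x^1(P')=\widehat\Pi_x^1(P)$, it chooses labels so that $P'$ and $P$ have equal jets componentwise and $Q$ is optimally matched to $P'$. It then shifts $Q$ componentwise by the fibre displacement, $\tilde h_i^*=\tilde h_i-(h_i'-h_i)$ and similarly for the inequalities. The resulting $Q^*$ satisfies $\widehat\Pi_x^1(Q^*)=Z$, and the labelled cost from $P$ to $Q^*$ equals that from $P'$ to $Q$, which gives the nontrivial inequality. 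This argument never uses $\|\cdot\|_{J_x}$ and needs no $\varepsilon$-approximation. However, it leaves the first two assertions of the lemma unproved: that $\widetilde d_{p,q}$ is a metric, and that it is the matching metric induced by the quotient norm. Your sandwich $d^J_{p,q}\le\widetilde d_{p,q}\le\inf_Q d_{p,q}(P,Q)\le d^J_{p,q}$ settles all three claims at once. The metric property follows from Proposition~\ref{prop:sym-matching-metric} applied to the normed space $J_x$. The identification $\widetilde d_{p,q}=d^J_{p,q}$ is exactly what the paper later assumes without proof when it expresses $\widetilde d_{p,q}$ through $\mu_x$, $v_x$, $C_x$ at the end of Section~\ref{sec:radius}. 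The price of your route is the $\varepsilon$-lifting step. That step is genuinely needed, because the lemma is stated under Assumption~\ref{as:func} only: there $W$ is merely Banach, and the quotient-norm infimum need not be attained. You handle this step correctly.
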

\begin{proof}
Let
\begin{equation}
    \mathcal F_Z=\{Q\in\mathcal C_{p,q}\mid \widehat\Pi_x^1(Q)=Z\}.
\end{equation}
By the definition of $\widetilde d_{p,q}$,
\begin{equation}
\widetilde d_{p,q}(\widehat\Pi_x^1(P),Z)
=
\inf\left\{
    d_{p,q}(P',Q)
    \ \middle|\
    \widehat\Pi_x^1(P')=\widehat\Pi_x^1(P),\ 
    Q\in\mathcal F_Z
\right\}.
\end{equation}
Since $P$ itself satisfies $\widehat\Pi_x^1(P)=\widehat\Pi_x^1(P)$, we immediately
obtain
\begin{equation}
    \widetilde d_{p,q}(\widehat\Pi_x^1(P),Z)
    \le
    \inf_{Q\in\mathcal F_Z}d_{p,q}(P,Q).
\end{equation}

It remains to prove the reverse inequality. Take arbitrary
$P'\in\mathcal C_{p,q}$ and $Q\in\mathcal F_Z$ with
\begin{equation}
    \widehat\Pi_x^1(P')=\widehat\Pi_x^1(P).
\end{equation}
Choose labelled representatives, after relabelling within equality and
inequality types, so that
\begin{equation}
    P=(h_1,\ldots,h_p, g_1,\ldots,g_q),
    \qquad
    P'=(h'_1,\ldots,h'_p, g'_1,\ldots,g'_q),
\end{equation}
and
\begin{equation}
    \Pi_x^1(h'_i)=\Pi_x^1(h_i),
    \qquad
    \Pi_x^1(g'_j)=\Pi_x^1(g_j).
\end{equation}
Choose a labelled representative
\begin{equation}
    Q=(\tilde h_1,\ldots,\tilde h_p, \tilde g_1,\ldots,\tilde g_q)
\end{equation}
realizing, or $\varepsilon$-realizing, the matching distance from $P'$ to
$Q$. Define
\begin{equation}
    \tilde h_i^*
    =
    \tilde h_i-(h'_i-h_i),
    \qquad
    \tilde g_j^*
    =
    \tilde g_j-(g'_j-g_j),
\end{equation}
and set
\begin{equation}
    Q^*=(\tilde h_1^*,\ldots,\tilde h_p^*, \tilde g_1^*,\ldots,\tilde g_q^*).
\end{equation}
Since
\begin{equation}
    \Pi_x^1(h'_i-h_i)=0,
    \qquad
    \Pi_x^1(g'_j-g_j)=0,
\end{equation}
we have
\begin{equation}
    \widehat\Pi_x^1(Q^*)=\widehat\Pi_x^1(Q)=Z.
\end{equation}
Moreover,
\begin{equation}
    h_i-\tilde h_i^*
    =
    h'_i-\tilde h_i,
    \qquad
    g_j-\tilde g_j^*
    =
    g'_j-\tilde g_j.
\end{equation}
Therefore the labelled cost from $P$ to $Q^*$ is the same as the labelled
cost from $P'$ to $Q$. Hence
\begin{equation}
    d_{p,q}(P,Q^*)\le d_{p,q}(P',Q).
\end{equation}
Since $Q^*\in\mathcal F_Z$, it follows that
\begin{equation}
    \inf_{R\in\mathcal F_Z}d_{p,q}(P,R)
    \le
    d_{p,q}(P',Q).
\end{equation}
Taking the infimum over all admissible $P'$ and $Q$ gives
\begin{equation}
    \inf_{R\in\mathcal F_Z}d_{p,q}(P,R)
    \le
    \widetilde d_{p,q}(\widehat\Pi_x^1(P),Z).
\end{equation}
Combining the two inequalities proves
\begin{equation}
    \widetilde d_{p,q}(\widehat\Pi_x^1(P),Z)
    =
    \inf\left\{
        d_{p,q}(P,Q)
        \ \middle|\
        \widehat\Pi_x^1(Q)=Z
    \right\}.
\end{equation}
\end{proof}
For $Z=(H_Z,G_Z)\in \mathcal C_{p,q}^J(x)$,
choose representatives
\begin{equation}
    H_Z=\overline{((a_1,\xi_1),\dots, (a_p,\xi_p))},
    \qquad
    G_Z=\overline{((c_1,\zeta_1),\dots,(c_q,\zeta_q))}.
\end{equation}
We say that $Z$ is feasible at $x$ if
\begin{equation}
    a_\alpha=0
    \quad \text{for all } \alpha=1,\ldots,p,
    \qquad
    c_i\le 0
    \quad \text{for all } i=1,\ldots,q.
\end{equation}
If $Z$ is feasible at $x$, its active inequality functions at $x$ are those
$(c_i,\zeta_i)\in G_Z$ satisfying $c_i=0$.

We say that a feasible jet formulation $Z$ satisfies LICQ at $x$ if the family
\begin{equation}
    \{\xi_\alpha\}_{\alpha=1}^p
    \cup
    \{\zeta_i \mid (c_i,\zeta_i)\in G_Z,\ c_i=0\}
\end{equation}
is linearly independent. This definition is independent of the chosen ordering
of the representatives.

The pointwise LICQ radius on $\mathcal C_{p,q}^J(x)$ is defined by
\begin{equation}
    r_{\mathrm{LICQ},J}^{p,q}(Z_1,x)
    =
    \inf\left\{
        \widetilde d_{p,q}(Z_1,Z_2)
        \mid
        Z_2 \text{ is feasible at }x
        \text{ and } Z_2 \text{ does not satisfy LICQ at }x
    \right\}.
\end{equation}
If the set in the infimum is empty, the value is understood to be $+\infty$.

Set 
\begin{equation}
    \mathcal B_{p,q}^W(x)
    =
    \{Q\in\mathcal C_{p,q}
      \mid
      x\in K(Q)
      \text{ and } Q \text{ does not satisfy LICQ at }x\}
\end{equation}
and
\begin{equation}
    \mathcal B_{p,q}^J(x)
    =
    \{Z\in\mathcal C_{p,q}^J(x)
      \mid
      Z \text{ is feasible at }x
      \text{ and } Z \text{ does not satisfy LICQ at }x\}.
\end{equation}

\begin{prop}
\label{prop:equivalence-radii}
For every $P\in \mathcal C_{p,q}$ and every $x\in K(P)$,
\begin{equation}
    r_{\mathrm{LICQ},W}^{p,q}(P,x)
    =
    r_{\mathrm{LICQ},J}^{p,q}(\widehat\Pi_x^1(P),x).
\end{equation}
\end{prop}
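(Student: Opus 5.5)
The plan is to show that the failure set in $\mathcal C_{p,q}$ is exactly the full preimage of the jet failure set, and then apply the preceding Lemma.

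\emph{Step 1: $\mathcal B_{p,q}^W(x)=(\widehat\Pi_x^1)^{-1}(\mathcal B_{p,q}^J(x))$.} For $Q\in\mathcal C_{p,q}$ with labelled representative $(h_1,\ldots,h_p,g_1,\ldots,g_q)$, the jet $\Pi_x^1(f)$ determines the pair $(f(x),\nabla f(x))$. Indeed, the linear map $f\mapsto(f(x),\nabla f(x))$ has kernel $\mathcal N_x^1$, so $J_x$ is identified injectively with a subspace of $\mathbb R\times\mathbb R^n$. Under this identification, the conditions ``$x\in K(Q)$'', the active index set $I_Q(x)$, and the rank of $Dc_{Q,I_Q(x)}(x)$ are expressed only through these pairs. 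They therefore agree with the jet notions of feasibility, active inequalities and LICQ for $\widehat\Pi_x^1(Q)$. Permutation invariance on both sides makes this independent of the labelling. Hence $Q\in\mathcal B^W_{p,q}(x)$ if and only if $\widehat\Pi_x^1(Q)\in\mathcal B^J_{p,q}(x)$.

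\emph{Step 2: rewrite both radii.} By Definition~\ref{def:LICQrad} and Step 1,
\begin{equation}
 r_{\mathrm{LICQ},W}^{p,q}(P,x)=\inf_{Z\in\mathcal B^J_{p,q}(x)}\ \inf\{d_{p,q}(P,Q)\mid \widehat\Pi_x^1(Q)=Z\},
\end{equation}
where $Z$ ranges only over the image of $\widehat\Pi_x^1$. This restriction is harmless, because $\widehat\Pi_x^1$ is surjective: $\Pi_x^1$ is a quotient map onto $J_x$, and $T^J_{p,q}$ is surjective. By the preceding Lemma, the inner infimum equals $\widetilde d_{p,q}(\widehat\Pi_x^1(P),Z)$. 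Taking the infimum over $Z\in\mathcal B^J_{p,q}(x)$ gives exactly $r_{\mathrm{LICQ},J}^{p,q}(\widehat\Pi_x^1(P),x)$. The conventions also match: if one failure set is empty, then by Step 1 and surjectivity so is the other, and both radii are $+\infty$.

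\emph{Main obstacle.} The only real care is in Step 1, in checking that the jet-side notions (stated via representatives $(a,\xi)$) are well defined and coincide with the function-side ones. The point to verify is that elements of $J_x$ may legitimately be written as pairs $(a,\xi)$, meaning the evaluation map is injective on $J_x$, which holds by construction of $\mathcal N_x^1$. Note that this map need not be onto $\mathbb R^{1+n}$ without further assumptions, but that is not needed here. The rest is a formal interchange of infima.
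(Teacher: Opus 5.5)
Your proposal is correct and follows the paper's proof essentially verbatim. Like the paper, you identify $\mathcal B^W_{p,q}(x)$ as the full preimage $(\widehat\Pi_x^1)^{-1}(\mathcal B^J_{p,q}(x))$ and then interchange the infima using the preceding Lemma; your extra remarks on the surjectivity of $\widehat\Pi_x^1$, the $+\infty$ convention, and the injectivity of evaluation on $J_x$ only make explicit points the paper leaves implicit.
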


\begin{proof}
By the definitions of feasibility and LICQ on the jet formulation space,
feasibility at $x$ and LICQ at $x$ depend only on the first-order jets.
Hence we have
\begin{equation}
    \mathcal B_{p,q}^W(x)
    =
    (\widehat\Pi_x^1)^{-1}\bigl(\mathcal B_{p,q}^J(x)\bigr).
\end{equation}
Therefore,
\begin{equation}
\begin{aligned}
r_{\mathrm{LICQ},W}^{p,q}(P,x)
&=
\inf_{Q\in \mathcal B_{p,q}^W(x)}
d_{p,q}(P,Q) \\
&=
\inf_{Z\in \mathcal B_{p,q}^J(x)}
\inf_{\widehat\Pi_x^1(Q)=Z}
d_{p,q}(P,Q) \\
&=
\inf_{Z\in \mathcal B_{p,q}^J(x)}
\widetilde d_{p,q}(\widehat\Pi_x^1(P),Z) \\
&=
r_{\mathrm{LICQ},J}^{p,q}(\widehat\Pi_x^1(P),x).
\end{aligned}
\end{equation}
This proves the claim.
\end{proof}

\subsubsection{Quotient norm}
\label{subssubec:quotient_norm}
Throughout the remainder of this paper, we fix a point $x \in \Omega$ and work under the following functional analytic setting.

\begin{assume}
\label{assum:jet-realizability}
We assume that:
\begin{enumerate}
    \item The function space $W$ is a Hilbert space. 
    \item The first-order evaluation map $E_x: W \to \mathbb{R}^{1+n}$ defined by $E_x(f) = (f(x), \nabla f(x))$ is a bounded linear operator, satisfying 
    \begin{equation}
        \|E_x(f)\|_{\mathbb{R}^{1+n}} \le C_{\Omega} \|f\|_W \quad \text{for all } f \in W.
    \end{equation}
    \item The map $E_x$ is surjective.
\end{enumerate}
\end{assume}

Under Assumption~\ref{assum:jet-realizability}, the surjectivity of $E_x$ ensures that any prescribed value and gradient at $x$ can be realized by some function in $W$.

 Moreover,
\begin{equation}
    \mathcal N_x^1=\ker E_x.
\end{equation}
Hence $E_x$ induces a linear isomorphism
\begin{equation}
    \overline E_x:J_x=W/\mathcal N_x^1\to \mathbb R^{1+n},
    \qquad
    \overline E_x(\Pi_x^1(f))=(f(x),\nabla f(x)).
\end{equation}
We identify $J_x$ with $\mathbb R\times\mathbb R^n$ through this isomorphism.

Under this identification, the quotient norm on $J_x$ is given by
\begin{equation}\label{eq:quotient-jet-norm-min}
    \|(a,\xi)\|_{J_x}^2
    =
    \min\left\{
        \|f\|_W^2
        \mid
        f(x)=a,\ \nabla f(x)=\xi
    \right\}.
\end{equation}
Indeed, the quotient norm is defined by
\begin{equation}
    \|\Pi_x^1(f)\|_{J_x}
    =
    \inf\left\{
        \|f+\varphi\|_W
        \mid
        \varphi\in\mathcal N_x^1
    \right\},
\end{equation}
and the affine space
\begin{equation}
    \{u\in W\mid u(x)=a,\ \nabla u(x)=\xi\}
\end{equation}
is exactly the equivalence class of any representative with first-order jet
$(a,\xi)$. Since $\mathcal N_x^1$ is a closed subspace of the Hilbert space
$W$, the above infimum is attained by the orthogonal projection onto
$(\mathcal N_x^1)^\perp$.

Consequently, \eqref{eq:quotient-jet-norm-min} defines the square of a Hilbert
norm on the finite-dimensional space $\mathbb R\times\mathbb R^n$. Hence there
exists a positive definite matrix
\begin{equation}
    H_x\in\mathbb R^{(1+n)\times(1+n)}
\end{equation}
such that
\begin{equation}
    \|(a,\xi)\|_{J_x}^2
    =
    \begin{pmatrix}
        a\\ \xi
    \end{pmatrix}^{\!\top}
    H_x
    \begin{pmatrix}
        a\\ \xi
    \end{pmatrix}.
\end{equation}
Writing $H_x$ in block form as
\begin{equation}
    H_x=
    \begin{pmatrix}
        \alpha_x & b_x^\top\\
        b_x & D_x
    \end{pmatrix},
\end{equation}
where $\alpha_x\in\mathbb R$, $b_x\in\mathbb R^n$, and
$D_x\in\mathbb R^{n\times n}$, we have $D_x\succ0$. Completing the square gives
\begin{equation}
\begin{aligned}
    \|(a,\xi)\|_{J_x}^2
    &=
    \alpha_x a^2+2a\,b_x^\top\xi+\xi^\top D_x\xi  \\
    &=
    \left(\alpha_x-b_x^\top D_x^{-1}b_x\right)a^2
    +
    \left(\xi+aD_x^{-1}b_x\right)^\top
    D_x
    \left(\xi+aD_x^{-1}b_x\right).
\end{aligned}
\end{equation}
Therefore, setting
\begin{equation}
    C_x=D_x^{-1},
    \qquad
    v_x=D_x^{-1}b_x,
    \qquad
    \mu_x=\alpha_x-b_x^\top D_x^{-1}b_x,
\end{equation}
we obtain
\begin{equation}\label{eq:jet-norm-block-representation}
    \|(a,\xi)\|_{J_x}^2
    =
    \mu_x a^2+\|\xi+a v_x\|_{C_x^{-1}}^2.
\end{equation}
where $\|y\|_{C_x^{-1}}^2=y^\top C_x^{-1}y$.
Since $H_x\succ0$, its Schur complement satisfies
\begin{equation}
    \mu_x=\alpha_x-b_x^\top D_x^{-1}b_x>0,
\end{equation}
and $C_x$ is positive definite.

\begin{prop}
Let $Z=(H_Z,G_Z),\ Y=(H_Y,G_Y) \in \mathcal C_{p,q}^J(x)$, and choose representatives
\begin{equation}
    H_Z=\overline{((a_1,\xi_1),\dots,(a_p,\xi_p))},
    \quad
    H_Y=\overline{((b_1,\eta_1),\dots,(b_p,\eta_p))},
\end{equation}
\begin{equation}
    G_Z=\overline{((c_1,\zeta_1),\dots,(a_q,\zeta_q))},
    \quad
    G_Y=\overline{((d_1,\theta_1),\dots,(d_q,\theta_q))}. 
\end{equation}
Then we have
\begin{equation}
\begin{aligned}
\widetilde d_{p,q}(Z,Y)^2
&=
\min_{\sigma\in S_p}
\sum_{\alpha=1}^p
\left[
    \mu_x(a_\alpha-b_{\sigma(\alpha)})^2
    +
    \left\|
        (\xi_\alpha-\eta_{\sigma(\alpha)})
        +(a_\alpha-b_{\sigma(\alpha)})v_x
    \right\|_{C_x^{-1}}^2
\right]
\\
&\quad+
\min_{\tau\in S_q}
\sum_{i=1}^q
\left[
    \mu_x(c_i-d_{\tau(i)})^2
    +
    \left\|
        (\zeta_i-\theta_{\tau(i)})
        +(c_i-d_{\tau(i)})v_x
    \right\|_{C_x^{-1}}^2
\right].
\end{aligned}
\end{equation}
\end{prop}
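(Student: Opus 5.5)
The plan is to show that $\widetilde d_{p,q}$ coincides with the matching metric built directly from the quotient norm $\|\cdot\|_{J_x}$ on $J_x$. Once that is established, the displayed formula follows by inserting the block representation \eqref{eq:jet-norm-block-representation}.

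\emph{Reduction to a matching formula.} Fix $Z,Y\in\mathcal C_{p,q}^J(x)$ and choose labelled representatives as in the statement. I claim that
\begin{equation}
\widetilde d_{p,q}(Z,Y)^2=\min_{\sigma\in S_p}\sum_{\alpha=1}^p\|(a_\alpha,\xi_\alpha)-(b_{\sigma(\alpha)},\eta_{\sigma(\alpha)})\|_{J_x}^2+\min_{\tau\in S_q}\sum_{i=1}^q\|(c_i,\zeta_i)-(d_{\tau(i)},\theta_{\tau(i)})\|_{J_x}^2 .
\end{equation}
The lower bound ``$\ge$'' goes as follows. Take any $P,Q\in\mathcal C_{p,q}$ with $\widehat\Pi_x^1(P)=Z$ and $\widehat\Pi_x^1(Q)=Y$, and relabel $Q$ by an optimal matching to $P$. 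The labels of $P$ and $Q$ then induce labellings of $Z$ and $Y$, hence permutations $\sigma,\tau$ relative to the fixed representatives. Since $\Pi_x^1$ is linear, each labelled difference satisfies $\|h_{P,\alpha}-h_{Q,\alpha}\|_W\ge\|\Pi_x^1(h_{P,\alpha})-\Pi_x^1(h_{Q,\alpha})\|_{J_x}$ by the definition of the quotient norm. Summing these bounds and then minimizing over permutations gives the lower bound.

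For the upper bound ``$\le$'', fix optimal $\sigma,\tau$ for the jet matching. By Assumption~\ref{assum:jet-realizability}, for each matched pair pick $f_\alpha\in W$ with jet equal to $(a_\alpha,\xi_\alpha)$, and pick $u_\alpha\in W$ attaining the minimum in \eqref{eq:quotient-jet-norm-min} for the jet $(a_\alpha-b_{\sigma(\alpha)},\xi_\alpha-\eta_{\sigma(\alpha)})$. Attainment follows from the orthogonal projection in the Hilbert space $W$. Then set $\tilde f_{\sigma(\alpha)}=f_\alpha-u_\alpha$, which has jet $(b_{\sigma(\alpha)},\eta_{\sigma(\alpha)})$, and treat the inequality constraints in the same way. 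The resulting $P=\overline{(f)}$ and $Q=\overline{(\tilde f)}$ project to $Z$ and $Y$, and the labelled cost from $P$ to $Q$ equals the jet matching cost. Since $d_{p,q}$ is a minimum over matchings, it is at most this labelled cost, which gives ``$\le$''. Because every infimum involved is attained, the minimum in the jet formula is genuinely a minimum.

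\emph{Inserting the norm.} Apply \eqref{eq:jet-norm-block-representation} to each difference jet $(a_\alpha-b_{\sigma(\alpha)},\,\xi_\alpha-\eta_{\sigma(\alpha)})$, and similarly to the inequality differences. Linearity of the difference places $a$-differences and $\xi$-differences together exactly in the form $\mu_x(\Delta a)^2+\|\Delta\xi+\Delta a\,v_x\|_{C_x^{-1}}^2$, which yields the stated expression. (The representative written ``$(a_q,\zeta_q)$'' in $G_Z$ should read $(c_q,\zeta_q)$.)

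The main obstacle is the ``$\ge$'' direction, specifically the bookkeeping of labellings. An optimal matching of $P$ to $Q$ must correspond to some permutation of the fixed jet representatives of $Z$ and $Y$. This works because $\widehat\Pi_x^1$ commutes with relabelling: the equivariance recorded in the commutative diagram lets any labelling of $P$ push forward to a labelling of $Z$ that differs from the chosen representative only by an element of $S_p\times S_q$.
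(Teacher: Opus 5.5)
Your proposal is correct and follows the same route as the paper. The paper's one-line proof treats $\widetilde d_{p,q}$ as the matching metric built from the quotient norm $\|\cdot\|_{J_x}$ and inserts the block representation \eqref{eq:jet-norm-block-representation}; your lower bound (quotient norm plus relabelling) and upper bound (minimal-norm representatives of the difference jets) supply the identification of $\widetilde d_{p,q}$ with that matching metric, which the preceding lemma asserts but does not actually prove.
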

\begin{proof}
This follows directly from the definition of the matching metric on
$\mathcal C_{p,q}^J(x)$.
\end{proof}
\section{Explicit formulas for LICQ radii}
\label{sec:Explicit}
In this section we derive an explicit formula for the pointwise LICQ
radius.  The formula is obtained by decomposing the perturbation needed
to destroy LICQ at a fixed feasible point into two contributions.  The
first one is the cost of changing the values of inequality constraints so
that a prescribed subset of them becomes active at the point.  The second
one is the cost of perturbing the corresponding gradients so that the
gradients of the equality constraints together with those active
inequality constraints become linearly dependent. 
\begin{dfn}

Let $P=(H_P,G_P)\in\mathcal C_{p,q}$ and $x\in K(P)$. 
Choose labelled representatives
\begin{equation}
    H_P=(h_1,\ldots,h_p),
    \qquad
    G_P=(g_1,\ldots,g_q).
\end{equation}
\begin{enumerate}
    \item For an index set $I\subseteq\{1,\ldots,q\}$, define the slack vector
\begin{equation}
    s_I(P,x)=(-g_i(x))_{i\in I}\in\mathbb R^{|I|}.
\end{equation}
We also define the extended slack vector
\begin{equation}
    \bar s_I(P,x)
    =
    (\underbrace{0,\ldots,0}_{p\ \mathrm{times}},\,s_I(P,x))
    \in\mathbb R^{p+|I|}.
\end{equation}
\item Let $A_I(P,x)\in\mathbb R^{(p+|I|)\times n}$ be the matrix whose rows are the
gradients of all equality constraints, followed by the gradients of the
inequality constraints indexed by $I$:
\begin{equation}
    A_I(P,x)
    =
    \begin{pmatrix}
        \nabla h_{P,1}(x)^\top\\
        \vdots\\
        \nabla h_{P,p(}x)^\top\\
        \nabla g_{P,i_1}(x)^\top\\
        \vdots\\
        \nabla g_{P,i_{|I|}}(x)^\top\\
    \end{pmatrix}.
\end{equation}

The order of the indices in $I$ is arbitrary but fixed; the quantities below
are independent of this order.

\item Define
\begin{equation}
    M_I(P,x)
    =
    A_I(P,x)-\bar s_I(P,x)v_x^\top.
\end{equation}
The rank-defect cost associated with $I$ is
\begin{equation}
\Delta_I(P,x)
=
\begin{cases}
\sigma_{\min}\!\left(M_I(P,x)C_x^{-1/2}\right),
& \text{if } p+|I|\le n,\\[0.5em]
0,
& \text{if } p+|I|>n.
\end{cases}
\end{equation}
\item Define
\begin{equation}
    \Theta_I(P,x)^2
    =
    \mu_x\|s_I(P,x)\|^2
    +
    \Delta_I(P,x)^2.
\end{equation}
\end{enumerate}
\end{dfn}
\begin{dfn}
For a fixed labelled representative of $P$ and an index set
$I\subseteq\{1,\ldots,q\}$, let $\mathcal B_I^J(P,x)$ denote the set of
jet formulations $Z\in\mathcal C_{p,q}^J(x)$ for which there exists a
labelled representative
\begin{equation}
    H_Z=((a_1,\xi_1),\dots,(a_p,\xi_p)),
    \qquad
    G_Z=((c_1,\zeta_1),\dots,(c_q,\zeta_q)),
\end{equation}
matched with the chosen labelled representative of $P$, such that
\begin{equation}
    a_\alpha=0,
    \qquad
    \alpha=1,\ldots,p,
\end{equation}
\begin{equation}
    c_i\le0,
    \qquad
    i=1,\ldots,q,
\end{equation}
\begin{equation}
    c_i=0,
    \qquad
    i\in I,
\end{equation}
and the vectors
\begin{equation}
    \xi_1,\ldots,\xi_p,\ \zeta_i\ (i\in I)
\end{equation}
are linearly dependent. Equivalently, $Z$ is feasible at $x$, the
inequalities indexed by $I$ are active at $x$, and LICQ fails for the
subsystem consisting of all equality constraints and the inequalities in $I$.
\end{dfn}    
Although these quantities are defined using labelled representatives, they are
invariant under simultaneous permutations of the inequality labels. Hence the
minimum over all index sets $I\subseteq\{1,\ldots,q\}$ is well-defined on the
quotient formulation space.
\begin{prop}
\label{prop:theta-candidate-cost}
Let $P=(H_P,G_P)\in\mathcal C_{p,q}$ and $x\in K(P)$, and set
$Z=\widehat\Pi_x^1(P)$.

The following assertions hold.

\begin{enumerate}
\item For every index set $I\subseteq\{1,\ldots,q\}$ with $p+|I|\ge1$,
there exists $Z_I^*\in\mathcal B_I^J(P,x)\subset\mathcal B_{p,q}^J(x)$
such that
\begin{equation}
    \widetilde d_{p,q}(Z,Z_I^*)
    \le
    \Theta_I(P,x).
\end{equation}

\item For every $Y\in\mathcal B_{p,q}^J(x)$, there exists an index set
$I\subseteq\{1,\ldots,q\}$ with $p+|I|\ge1$ such that
\begin{equation}
    \widetilde d_{p,q}(Z,Y)
    \ge
    \Theta_I(P,x).
\end{equation}
\end{enumerate}
\end{prop}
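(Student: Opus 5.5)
The plan is to compute the matching cost row by row in the coordinates given by \eqref{eq:jet-norm-block-representation}, using a change of variables that absorbs the coupling term $a v_x$. Fix a labelled representative of $P$. Write $Z=\widehat\Pi_x^1(P)$ with rows $(0,\nabla h_\alpha(x))$ for the equalities and $(g_i(x),\nabla g_i(x))$ for the inequalities.

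For a perturbed row $(a+\delta a,\ \xi+\delta\xi)$, the proposition preceding this section gives the row cost
\begin{equation}
\mu_x\,\delta a^2+\|\delta\xi+\delta a\,v_x\|_{C_x^{-1}}^2 .
\end{equation}
Set $\omega=\delta\xi+\delta a\,v_x$. Then the new gradient is $\xi+\omega-\delta a\,v_x$, and the gradient part of the cost is $\|C_x^{-1/2}\omega\|^2$. This substitution is the key step: it turns the coupled quotient norm into a decoupled value cost plus a Frobenius cost.

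\textbf{Part (1).} Fix $I$ with $p+|I|\ge1$ and use the identity matching.

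\begin{itemize}
\item Leave every inequality row with index outside $I$ unchanged. It stays feasible because $x\in K(P)$, and it costs nothing.
\item On equality rows take $\delta a=0$.
\item On rows $i\in I$ take $\delta a_i=s_i=-g_i(x)$, so that $c_i=0$.
\end{itemize}

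Stacking the $\omega$'s of the $p+|I|$ rows into a matrix $\Omega$, the new active gradient matrix is
\begin{equation}
A_I-\bar s_I v_x^\top+\Omega=M_I+\Omega .
\end{equation}
The cost is $\mu_x\|s_I\|^2+\|\Omega C_x^{-1/2}\|_F^2$. Since $C_x^{-1/2}$ is invertible, $M_I+\Omega$ has row rank deficiency if and only if $(M_I+\Omega)C_x^{-1/2}$ does.

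\begin{itemize}
\item If $p+|I|\le n$, the Eckart--Young theorem gives an $E$ with $\|E\|_F=\sigma_{\min}(M_IC_x^{-1/2})$ such that $M_IC_x^{-1/2}+E$ is row rank deficient. Take $\Omega=EC_x^{1/2}$.
\item If $p+|I|>n$, take $\Omega=0$; the rows are automatically dependent.
\end{itemize}

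The resulting $Z_I^*$ lies in $\mathcal B_I^J(P,x)$. Its labelled cost is $\Theta_I(P,x)^2$, which bounds $\widetilde d_{p,q}(Z,Z_I^*)^2$ from above.

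\textbf{Part (2).} Take $Y\in\mathcal B^J_{p,q}(x)$ and choose permutations attaining the minima in the matching formula. Relabel $Y$ so that its rows are matched to the fixed labels of $P$. Let $I$ be the set of indices $i$ with $d_i=0$, i.e.\ the active inequalities of $Y$.

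\begin{itemize}
\item LICQ fails for $Y$, and the empty family is linearly independent, so $p+|I|\ge1$.
\item Discard the nonnegative costs of rows outside $I$.
\item On the equality rows, $\delta a=0$ is forced, since both $Z$ and $Y$ have zero values there.
\item On rows $i\in I$, $\delta a_i=0-g_i(x)=s_i$ is forced.
\end{itemize}

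With the same substitution, the remaining cost is $\mu_x\|s_I\|^2+\|\Omega C_x^{-1/2}\|_F^2$. Here $M_I+\Omega$ is exactly the active gradient matrix of $Y$, which is row rank deficient. By the Eckart--Young lower bound, $\|\Omega C_x^{-1/2}\|_F\ge\sigma_{\min}(M_IC_x^{-1/2})$ when $p+|I|\le n$, and the bound by $0$ is trivial otherwise. Hence $\widetilde d_{p,q}(Z,Y)^2\ge\Theta_I(P,x)^2$.

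\textbf{Main obstacle.} I expect the main difficulty to be bookkeeping rather than analysis. One must check that the index set $I$ produced in part (2) is defined relative to the fixed labelling of $P$ through the optimal matching. One must also check that the quantities $\Theta_I$ behave consistently under that relabelling, which follows from the invariance remark preceding the proposition. A further point to verify is that the forced value increments $\delta a_i=s_i$ are exactly what produce the correction $-\bar s_Iv_x^\top$ in $M_I$.
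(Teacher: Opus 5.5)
Your proposal is correct and is essentially the paper's proof: your substitution $\omega=\delta\xi+\delta a\,v_x$ is exactly the paper's $E=\Xi+\bar s_I v_x^\top$, so your $\Omega$ and $E$ are the paper's $E^*$ and $F^*$, and both parts then reduce to the Eckart--Young--Mirsky distance from $M_I C_x^{-1/2}$ to the row-rank-deficient matrices. Your remark that $p+|I|\ge 1$ holds because the empty family is linearly independent makes explicit a point the paper leaves implicit.
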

\begin{proof}
We first prove assertion (1). Fix an index set
$I\subseteq\{1,\ldots,q\}$ with $p+|I|\ge1$. Write
\begin{equation}
    I=\{i_1,\ldots,i_k\},
    \qquad k=|I|,
\end{equation}
in an arbitrary but fixed order, and set $m=p+k$. Set
\begin{equation}
    B=M_I(P,x)C_x^{-1/2}.
\end{equation}

If $m\le n$, choose a matrix $F^*\in\mathbb R^{m\times n}$ satisfying
\begin{equation}
    \operatorname{rank}(B+F^*)<m,
    \qquad
    \|F^*\|_F=\sigma_{\min}(B).
\end{equation}
Such an $F^*$ exists by the Eckart--Young--Mirsky theorem. If $m>n$, the
rank-defect condition is automatic, and we take
\begin{equation}
    F^*=0.
\end{equation}
In both cases,
\begin{equation}
    \|F^*\|_F=\Delta_I(P,x),
    \qquad
    \operatorname{rank}(B+F^*)<m.
\end{equation}

Define
\begin{equation}
    E^*=F^* C_x^{1/2}
\end{equation}
and
\begin{equation}
    \Xi^*=E^*-\bar s_I(P,x)v_x^\top.
\end{equation}
Then
\begin{equation}
    A_I(P,x)+\Xi^*
    =
    A_I(P,x)-\bar s_I(P,x)v_x^\top+E^*
    =
    M_I(P,x)+E^*.
\end{equation}
Multiplying by $C_x^{-1/2}$, we obtain
\begin{equation}
    (A_I(P,x)+\Xi^*)C_x^{-1/2}
    =
    M_I(P,x)C_x^{-1/2}+E^*C_x^{-1/2}
    =
    B+F^*.
\end{equation}
Hence
\begin{equation}
    \operatorname{rank}(A_I(P,x)+\Xi^*)<m.
\end{equation}

We now define a jet formulation $Z_I^*$. For the equality functions, set
\begin{equation}
    z_\alpha^H
    =
    \bigl(0,\nabla h_\alpha(x)+\Xi_\alpha^*\bigr),
    \qquad \alpha=1,\ldots,p,
\end{equation}
where $\Xi_\alpha^*$ denotes the $\alpha$-th row of $\Xi^*$. For the
inequality functions indexed by $I$, set
\begin{equation}
    z_{i_\ell}^G
    =
    \bigl(0,\nabla g_{i_\ell}(x)+\Xi_{p+\ell}^*\bigr),
    \qquad \ell=1,\ldots,k.
\end{equation}
All inequality functions not indexed by $I$ are left unchanged:
\begin{equation}
    z_j^G
    =
    \bigl(g_j(x),\nabla g_j(x)\bigr),
    \qquad j\notin I.
\end{equation}

Then $Z_I^*$ is feasible at $x$, the inequality functions indexed by $I$
are active at $x$, and the gradients of all equality functions together with
the gradients of the inequality functions indexed by $I$ are linearly
dependent. Thus
\begin{equation}
    Z_I^*\in\mathcal B_I^J(P,x)\subset\mathcal B_{p,q}^J(x).
\end{equation}

The squared cost along the prescribed labelled matching is
\begin{equation}
\begin{aligned}
    &\mu_x\|s_I(P,x)\|^2
    +
    \|(\Xi^*+\bar s_I(P,x)v_x^\top)C_x^{-1/2}\|_F^2  \\
    &\qquad =
    \mu_x\|s_I(P,x)\|^2
    +
    \|E^*C_x^{-1/2}\|_F^2  \\
    &\qquad =
    \mu_x\|s_I(P,x)\|^2
    +
    \|F^*\|_F^2  \\
    &\qquad =
    \mu_x\|s_I(P,x)\|^2
    +
    \Delta_I(P,x)^2  \\
    &\qquad =
    \Theta_I(P,x)^2.
\end{aligned}
\end{equation}
Since $\widetilde d_{p,q}$ is obtained by minimizing over all admissible
matchings, it is no larger than the cost of this prescribed matching. Hence
\begin{equation}
    \widetilde d_{p,q}(Z,Z_I^*)
    \le
    \Theta_I(P,x).
\end{equation}
This proves assertion (1).

We next prove assertion (2). Let $Y\in\mathcal B_{p,q}^J(x)$ be arbitrary.
Choose labelled representatives of $Z$ and $Y$ realizing the optimal
matching distance $\widetilde d_{p,q}(Z,Y)$. Since $Y$ fails LICQ at
$x$, there exists an index set $I\subseteq\{1,\ldots,q\}$ with
$p+|I|\ge1$ such that, in this chosen matching, the inequality functions
indexed by $I$ are active at $x$, and the gradients of all equality
functions together with those inequality functions are linearly dependent.

Write
\begin{equation}
    I=\{i_1,\ldots,i_k\},
    \qquad k=|I|,
\end{equation}
and set $m=p+k$. Let $\Xi\in\mathbb R^{m\times n}$ be the gradient
perturbation matrix corresponding to the equality functions and the inequality
functions indexed by $I$, under the chosen optimal matching. Since the
inequality functions indexed by $I$ are active in $Y$, the value
perturbation on these $m$ functions is $\bar s_I(P,x)$. Moreover,
\begin{equation}
    \operatorname{rank}(A_I(P,x)+\Xi)<m.
\end{equation}

Set
\begin{equation}
    E=\Xi+\bar s_I(P,x)v_x^\top.
\end{equation}
Then
\begin{equation}
    A_I(P,x)+\Xi
    =
    M_I(P,x)+E.
\end{equation}
Therefore
\begin{equation}
    \operatorname{rank}(M_I(P,x)+E)<m.
\end{equation}
Equivalently,
\begin{equation}
    \operatorname{rank}
    \bigl(M_I(P,x)C_x^{-1/2}+EC_x^{-1/2}\bigr)
    <m.
\end{equation}
By the definition of $\Delta_I(P,x)$, or equivalently by the
Eckart--Young--Mirsky theorem,
\begin{equation}
    \|EC_x^{-1/2}\|_F
    \ge
    \Delta_I(P,x).
\end{equation}

Since $\widetilde d_{p,q}(Z,Y)$ is computed using the chosen optimal matching,
and since the remaining constraints contribute nonnegative cost, we obtain
\begin{equation}
\begin{aligned}
    \widetilde d_{p,q}(Z,Y)^2
    &\ge
    \mu_x\|s_I(P,x)\|^2
    +
    \|EC_x^{-1/2}\|_F^2  \\
    &\ge
    \mu_x\|s_I(P,x)\|^2
    +
    \Delta_I(P,x)^2 \\
    &=
    \Theta_I(P,x)^2.
\end{aligned}
\end{equation}
Hence
\begin{equation}
    \widetilde d_{p,q}(Z,Y)
    \ge
    \Theta_I(P,x).
\end{equation}
This proves assertion (2).
\end{proof}
The quantity $\Theta_I(P,x)$ measures the first-order cost of producing an
LICQ failure involving all equality constraints and the inequalities indexed by
$I$. Its value component
$\mu_x\|s_I(P,x)\|^2$ is the cost of making those inequalities active at
$x$, while its gradient component $\Delta_I(P,x)^2$ is the squared distance
from the corrected active-gradient matrix to the set of rank-deficient matrices.
Thus the formula below states that the LICQ radius is the minimum cost, over
all possible active subsystems, of creating a first-order LICQ failure.
\begin{thm}[Explicit LICQ radius formula]
\label{thm:explicit-radius}
Let \(P=(H_P,G_P)\in\mathcal C_{p,q}\), and let \(x\in K(P)\).
Choose labelled representatives
\begin{equation}
    H_P=(h_1,\ldots,h_p),
    \qquad
    G_P=(g_1,\ldots,g_q).
\end{equation}
Then
\begin{equation}
    r_{\mathrm{LICQ},W}^{p,q}(P,x)
    =
    \min_{\substack{I\subseteq\{1,\ldots,q\}\\ p+|I|\ge 1}}
    \Theta_I(P,x).
\end{equation}
If the index set of the minimum is empty, the minimum is understood to be
$+\infty$.
\end{thm}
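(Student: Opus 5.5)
The plan is to deduce the formula from Proposition~\ref{prop:equivalence-radii} together with the two assertions of Proposition~\ref{prop:theta-candidate-cost}. First, by Proposition~\ref{prop:equivalence-radii}, $r_{\mathrm{LICQ},W}^{p,q}(P,x)=r_{\mathrm{LICQ},J}^{p,q}(Z,x)$ with $Z=\widehat\Pi_x^1(P)$, where the right-hand side is $\inf\{\widetilde d_{p,q}(Z,Y)\mid Y\in\mathcal B_{p,q}^J(x)\}$. The statement then reduces to a two-sided comparison of this infimum with $\min_I\Theta_I(P,x)$, the minimum running over $I\subseteq\{1,\ldots,q\}$ with $p+|I|\ge1$. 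This is a finite minimum, so it is attained. Its value does not depend on the labelled representative, by the invariance remark preceding Proposition~\ref{prop:theta-candidate-cost}.

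Before the two inequalities, I will dispose of the degenerate case. If $p=q=0$, the index set is empty and the minimum is $+\infty$ by convention. In that case every jet formulation in $\mathcal C_{0,0}^J(x)$ has the empty family of gradients, which is linearly independent. Hence $\mathcal B_{0,0}^J(x)=\varnothing$, the infimum is $+\infty$, and the two sides agree. From now on assume $p+q\ge1$, so the index set is nonempty.

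For the upper bound, fix any admissible $I$. Assertion~(1) of Proposition~\ref{prop:theta-candidate-cost} supplies $Z_I^*\in\mathcal B_{p,q}^J(x)$ with $\widetilde d_{p,q}(Z,Z_I^*)\le\Theta_I(P,x)$. Hence $r_{\mathrm{LICQ},J}^{p,q}(Z,x)\le\Theta_I(P,x)$. Minimizing over $I$ gives $r\le\min_I\Theta_I$.

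For the lower bound, take any $Y\in\mathcal B_{p,q}^J(x)$. Assertion~(2) provides an admissible $I$ with $\widetilde d_{p,q}(Z,Y)\ge\Theta_I(P,x)\ge\min_J\Theta_J(P,x)$. Taking the infimum over $Y$ gives $r\ge\min_I\Theta_I$. If $\mathcal B_{p,q}^J(x)$ happens to be empty, the inequality holds trivially with $r=+\infty$; in fact assertion~(1) shows that this cannot happen once $p+q\ge1$.

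The main obstacle is not this assembly but checking that assertion~(2) is being used legitimately, since it carries the substance of the proof. The point to verify is that the optimal matching of $Z$ and $Y$ is compatible with the index set $I$ that witnesses the LICQ failure of $Y$. The set $I$ is chosen in the labels of $Y$ induced by that matching, and $\Theta_I$ is then evaluated on $P$ under the same relabelling. One also needs that every active inequality of $Y$ that is active in $P$ contributes zero slack, and that this bookkeeping agrees with $\bar s_I$. I would handle this by fixing the labels of $P$ through the optimal permutation before selecting $I$, and by noting that a minimizing matching exists because $S_p\times S_q$ is finite.
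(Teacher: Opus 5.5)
Your proposal is correct and is essentially the paper's own proof. You reduce to the jet-space radius via Proposition~\ref{prop:equivalence-radii}, then take the upper bound from assertion~(1) and the lower bound from assertion~(2) of Proposition~\ref{prop:theta-candidate-cost}; your explicit treatment of $p=q=0$ and of the matching bookkeeping in assertion~(2) are harmless additions, and the paper covers the degenerate case implicitly since $\mathcal B_{0,0}^J(x)=\varnothing$ makes both sides $+\infty$.
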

\begin{proof}
Set $Z=\widehat\Pi_x^1(P)$. By
Proposition~\ref{prop:equivalence-radii}, it is enough to show that
\begin{equation}
    r_{\mathrm{LICQ},J}^{p,q}(Z,x)
    =
    \min_{\substack{I\subseteq\{1,\ldots,q\}\\ p+|I|\ge1}}
    \Theta_I(P,x).
\end{equation}

First, fix an index set $I\subseteq\{1,\ldots,q\}$ with $p+|I|\ge1$.
By Proposition~\ref{prop:theta-candidate-cost}(1), there exists
$Z_I^*\in\mathcal B_{p,q}^J(x)$ such that
\begin{equation}
    \widetilde d_{p,q}(Z,Z_I^*)
    \le
    \Theta_I(P,x).
\end{equation}
Therefore
\begin{equation}
    r_{\mathrm{LICQ},J}^{p,q}(Z,x)
    \le
    \Theta_I(P,x).
\end{equation}
Taking the minimum over all such $I$, we obtain
\begin{equation}
    r_{\mathrm{LICQ},J}^{p,q}(Z,x)
    \le
    \min_{\substack{I\subseteq\{1,\ldots,q\}\\ p+|I|\ge1}}
    \Theta_I(P,x).
\end{equation}

Conversely, let $Y\in\mathcal B_{p,q}^J(x)$ be arbitrary. By
Proposition~\ref{prop:theta-candidate-cost}(2), there exists an index set
$I\subseteq\{1,\ldots,q\}$ with $p+|I|\ge1$ such that
\begin{equation}
    \widetilde d_{p,q}(Z,Y)
    \ge
    \Theta_I(P,x).
\end{equation}
Hence
\begin{equation}
    \widetilde d_{p,q}(Z,Y)
    \ge
    \min_{\substack{I'\subseteq\{1,\ldots,q\}\\ p+|I'|\ge1}}
    \Theta_{I'}(P,x).
\end{equation}
Taking the infimum over $Y\in\mathcal B_{p,q}^J(x)$, we obtain
\begin{equation}
    r_{\mathrm{LICQ},J}^{p,q}(Z,x)
    \ge
    \min_{\substack{I\subseteq\{1,\ldots,q\}\\ p+|I|\ge1}}
    \Theta_I(P,x).
\end{equation}
Combining the two inequalities proves the formula.
\end{proof}
\section{Augmentation formulas for LICQ radii}
\label{sec:augmentation}

For $\phi\in W$, we write $P^{\phi\le}$ for the formulation obtained from
$P$ by adding the inequality constraint $\phi\le 0$, and $P^{\phi=}$ for
the formulation obtained by adding the equality constraint $\phi=0$. The
addition is understood in the sense of unordered collections.

The augmentation formulas are direct consequences of the explicit radius
formula. When an inequality constraint is added, a nearest LICQ failure either
ignores the new inequality or uses it as part of the active subsystem. The first
case gives the original radius, while the second case gives the additional
terms involving $\Theta_{I\cup\{q+1\}}$. When an equality constraint is added,
the new equality is automatically included in every active-gradient subsystem,
so the radius is obtained by applying the explicit formula to the augmented
formulation.
\begin{thm}[Addition of an inequality constraint]
\label{thm:augmentation-ineq}
Let $P=(H_P,G_P)\in\mathcal C_{p,q}$ and let $x\in K(P)$. Let
$\phi\in W$ satisfy $\phi(x)\le 0$. Then we have
\begin{equation}
    r_{\mathrm{LICQ},W}^{p,q+1}(P^{\phi\le},x)
    =
    \min\left\{
        r_{\mathrm{LICQ},W}^{p,q}(P,x),
        \;
        \min_{I\subseteq\{1,\ldots,q\}}
        \Theta_{I\cup\{q+1\}}(P^{\phi\le},x)
    \right\}.
\end{equation}

\end{thm}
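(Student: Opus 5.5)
We apply Theorem~\ref{thm:explicit-radius} to the augmented formulation $P^{\phi\le}\in\mathcal C_{p,q+1}$. First, $x\in K(P^{\phi\le})$ holds because $x\in K(P)$ and $\phi(x)\le0$. Choose the labelled representative of $P^{\phi\le}$ whose equality part is $(h_1,\ldots,h_p)$ and whose inequality part is $(g_1,\ldots,g_q,\phi)$, so that $g_{q+1}=\phi$. Then Theorem~\ref{thm:explicit-radius} gives
\begin{equation}
    r_{\mathrm{LICQ},W}^{p,q+1}(P^{\phi\le},x)
    =
    \min_{\substack{J\subseteq\{1,\ldots,q+1\}\\ p+|J|\ge1}}
    \Theta_J(P^{\phi\le},x).
\end{equation}
We split the index sets $J$ according to whether $q+1\in J$.

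If $q+1\notin J$, then $J=I\subseteq\{1,\ldots,q\}$. In this case $s_I(P^{\phi\le},x)=s_I(P,x)$, $\bar s_I(P^{\phi\le},x)=\bar s_I(P,x)$ and $A_I(P^{\phi\le},x)=A_I(P,x)$, since these objects only involve the equality constraints and the $g_i$ with $i\in I$. The quantities $\mu_x$, $v_x$ and $C_x$ depend only on $W$ and $x$, not on the formulation. Hence $M_I$, $\Delta_I$ and $\Theta_I$ coincide for $P$ and $P^{\phi\le}$. The constraint $p+|I|\ge1$ is the same in both strata, so applying Theorem~\ref{thm:explicit-radius} to $P$ shows that the minimum over these $J$ equals $r_{\mathrm{LICQ},W}^{p,q}(P,x)$. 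This includes the degenerate case: when the index set is empty ($p=q=0$), both sides equal $+\infty$ by the stated convention.

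If $q+1\in J$, write $J=I\cup\{q+1\}$ with $I\subseteq\{1,\ldots,q\}$ arbitrary, including $I=\varnothing$. The condition $p+|J|\ge1$ now holds automatically, so this part of the minimum is exactly $\min_{I\subseteq\{1,\ldots,q\}}\Theta_{I\cup\{q+1\}}(P^{\phi\le},x)$. Since the minimum over a finite union of index families is the minimum of the two partial minima, the formula follows.

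The only delicate step is the identification $\Theta_I(P^{\phi\le},x)=\Theta_I(P,x)$ for $I\subseteq\{1,\ldots,q\}$. We must confirm that the labelled-representative bookkeeping is consistent: the inequality labels $1,\ldots,q$ of $P^{\phi\le}$ must refer to the same functions as in $P$. This is ensured by our choice of representative, together with the remark after the definition of $\mathcal B_I^J$ that these quantities are invariant under simultaneous relabelling. With that in place, no further analysis is needed.
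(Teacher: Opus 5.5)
Your proof is correct and follows the paper's argument: apply Theorem~\ref{thm:explicit-radius} to $P^{\phi\le}$ with the representative $(g_1,\ldots,g_q,\phi)$, then split the index sets according to whether $q+1$ belongs to them. Your additional checks make explicit some details the paper leaves implicit, namely that $x\in K(P^{\phi\le})$, that $\Theta_I$ is unchanged for $I\subseteq\{1,\ldots,q\}$, that $p+|J|\ge1$ holds automatically when $q+1\in J$, and the degenerate case $p=q=0$.
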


\begin{proof}
Choose labelled representatives
\begin{equation}
    G_P=(g_1,\ldots,g_q),
    \qquad
    G_{P^{\phi\le}}=(g_1,\ldots,g_q,\phi).
\end{equation}
Apply Theorem~\ref{thm:explicit-radius} to the augmented formulation
$P^{\phi\le}$. We obtain
\begin{equation}
    r_{\mathrm{LICQ},W}^{p,q+1}(P^{\phi\le},x)
    =
    \min_{\substack{L\subseteq\{1,\ldots,q+1\}\\ p+|L|\ge1}}
    \Theta_L(P^{\phi\le},x).
\end{equation}
Every index set $L\subseteq\{1,\ldots,q+1\}$ falls into exactly one of the
following two classes.

First, suppose $q+1\notin L$. Then $L\subseteq\{1,\ldots,q\}$, and the
corresponding cost is the same as in the original formulation:
\begin{equation}
    \Theta_L(P^{\phi\le},x)=\Theta_L(P,x).
\end{equation}
Therefore the minimum over all such $L$ is
\begin{equation}
    r_{\mathrm{LICQ},W}^{p,q}(P,x)
\end{equation}
by Theorem~\ref{thm:explicit-radius}.

Second, suppose $q+1\in L$. Then $L$ can be written uniquely as
\begin{equation}
    L=I\cup\{q+1\}
\end{equation}
for some $I\subseteq\{1,\ldots,q\}$. Hence the minimum over all such $L$ is
\begin{equation}
    \min_{I\subseteq\{1,\ldots,q\}}
    \Theta_{I\cup\{q+1\}}(P^{\phi\le},x).
\end{equation}
Taking the minimum over the two classes gives the desired formula.
\end{proof}

\begin{thm}[Addition of an equality constraint]
\label{thm:augmentation-eq}
Let $P=(H_P,G_P)\in\mathcal C_{p,q}$ and let $x\in K(P)$. Let
$\phi\in W$ satisfy $\phi(x)=0$. Then $x\in K(P^{\phi=})$, and
\begin{equation}
    r_{\mathrm{LICQ},W}^{p+1,q}(P^{\phi=},x)
    =
    \min_{I\subseteq\{1,\ldots,q\}}
    \Theta_I(P^{\phi=},x).
\end{equation}
\end{thm}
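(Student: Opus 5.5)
The plan is to reduce the statement to Theorem~\ref{thm:explicit-radius} applied in the stratum $\mathcal C_{p+1,q}$, after checking feasibility and the index-set condition.

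\emph{Feasibility.} Choose labelled representatives $H_P=(h_1,\ldots,h_p)$ and $G_P=(g_1,\ldots,g_q)$, so that $H_{P^{\phi=}}=(h_1,\ldots,h_p,\phi)$ and $G_{P^{\phi=}}=G_P$. Since $x\in K(P)$, we have $h_\alpha(x)=0$ for all $\alpha$ and $g_i(x)\le0$ for all $i$. Together with $\phi(x)=0$, this gives $x\in K(P^{\phi=})$ directly from the definition of $K$. In particular, the hypothesis $x\in K(P^{\phi=})$ of Theorem~\ref{thm:explicit-radius} holds for the augmented formulation.

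\emph{Applying the explicit formula.} Theorem~\ref{thm:explicit-radius}, applied with $(p+1,q)$ in place of $(p,q)$, gives
\begin{equation}
r_{\mathrm{LICQ},W}^{p+1,q}(P^{\phi=},x)=\min_{\substack{I\subseteq\{1,\ldots,q\}\\ (p+1)+|I|\ge1}}\Theta_I(P^{\phi=},x).
\end{equation}
The constraint $(p+1)+|I|\ge1$ holds for every $I$, including $I=\varnothing$, because $p\ge0$. So the restriction on the index set can be dropped, and the minimum runs over all subsets of $\{1,\ldots,q\}$, which is exactly the stated formula. The index family is nonempty, so the minimum is finite. Here $\Theta_I(P^{\phi=},x)$ is computed using $p+1$ equality rows in $A_I$ and $p+1$ leading zeros in $\bar s_I$, so the new row $\nabla\phi(x)^\top$ is included in every subsystem.

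\emph{Main obstacle.} There is essentially none; the only care needed is bookkeeping. One must confirm that Theorem~\ref{thm:explicit-radius} is stated for arbitrary strata, so that the case distinction $p+|I|\le n$ versus $p+|I|>n$ in $\Delta_I$ is read with $p+1$. One must also confirm that the standing Assumption~\ref{assum:jet-realizability} at the fixed point $x$ does not depend on $(p,q)$, so it carries over unchanged to the augmented formulation.
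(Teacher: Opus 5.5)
Your proposal is correct and matches the paper's proof: apply Theorem~\ref{thm:explicit-radius} in the stratum $\mathcal C_{p+1,q}$, then observe that the condition $(p+1)+|I|\ge1$ holds for every $I$ and can be dropped. Your explicit check that $x\in K(P^{\phi=})$, which is needed to invoke that theorem, fills in a step the paper leaves implicit.
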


\begin{proof}
Apply Theorem~\ref{thm:explicit-radius} to $P^{\phi=}$. The equality
collection of $P^{\phi=}$ is obtained by adding the equality function $\phi$,
while the inequality collection remains indexed by
$\{1,\ldots,q\}$. Therefore
\begin{equation}
    r_{\mathrm{LICQ},W}^{p+1,q}(P^{\phi=},x)
    =
    \min_{\substack{I\subseteq\{1,\ldots,q\}\\ (p+1)+|I|\ge1}}
    \Theta_I(P^{\phi=},x).
\end{equation}
Since $(p+1)+|I|\ge1$ holds for every
$I\subseteq\{1,\ldots,q\}$, this reduces to
\begin{equation}
    r_{\mathrm{LICQ},W}^{p+1,q}(P^{\phi=},x)
    =
    \min_{I\subseteq\{1,\ldots,q\}}
    \Theta_I(P^{\phi=},x).
\end{equation}
The added equality function $\phi$ is automatically included in the equality
part of every active-gradient matrix appearing in $\Theta_I(P^{\phi=},x)$.
This proves the formula.
\end{proof}

\end{document}